\DeclarePairedDelimiter\ceil{\lceil}{\rceil}
\title{On the number  of geodesics of Petersen graph $GP(n,2)$ }
\author{\small {Sunil~Kumar R\thanks{ The work of the author is supported by the University Grants Commission (UGC),
Government of India, under the scheme of Faculty Development Programme (FDP) for colleges.}\;\;, Kannan Balakrishnan$^+$}\\
        \small{Department of Computer~Applications}\\
        \small{Cochin University of Science and Technology}\\
        \small{ e-mail:\,$^*$sunilstands@gmail.com, $^+$mullayilkannan@gmail.com} }
           \date{ }
\newtheorem{theorem}{Theorem}[section]
\newtheorem{lemma}{Lemma}[section]
\newtheorem{corollary}{Corollary}[section]
\newtheorem{proposition}{Proposition}[section]
\newtheorem{definition}{Definition}
\begin{document}
\maketitle
\begin{abstract} 
In any network, the interconnection of nodes by means of geodesics and the number of geodesics existing between nodes are important. There exists a class of centrality measures  based on the number of geodesics passing through a vertex. Betweenness centrality indicates the betweenness of a vertex or how often a vertex appears on geodesics between  other vertices. It has wide applications in the analysis of networks. Consider $GP(n,k)$.
For each $n$ and $k \,(n > 2k)$, the generalized Petersen graph $GP ( n , k )$ is a trivalent graph with vertex set
$\{ u_ i ,\, v_ i \,|\, 0 \leq i \leq n - 1 \}$  and edge set $\{ u_ i u_ {i + 1} , u_ i v _i , v_ i v_{ i + k}\, |\, 0\leq i \leq n - 1, \hbox{ subscripts reduced modulo } n \}$. There are three kinds of edges namely outer edges, spokes and inner edges. The outer vertices generate an $n$-cycle called outer cycle and inner vertices generate one or more inner cycles.   In this paper, we consider $GP(n,2)$ and find expressions for the number of  geodesics  and betweenness centrality.\\

\textbf{Keywords}: Petersen graph, geodesics, wicket,  M\"{o}bius strip, betweenness centrality,  induced betweenness centrality.
 \end{abstract} 
 
  \section{Introduction}
  Generalized Petersen graphs were first defined by Watkins\cite{watkins1969theorem} who was
interested in trivalent graphs without proper three edge-colorings.
For integers $n$ and $k$ with $1 \leq k < n/2$, the \textit{generalized Petersen graph}
$GP(n,k)$ has been defined as an undirected graph with vertex-set $V=\{u_0,u_1,\ldots,u_{n-1},v_0,v_1,\ldots,v_{n-1}\}$ and edge set $E$ consisting of all pairs of the three forms $(u_i,u_{i+1}),(u_i,v_i)$ and $(v_i,v_{i+k})$ where $i$ is an integer and all subscripts are read modulo $n$. The above three forms of edges are called \textit{outer edges, spokes, and inner edges} respectively. In this original definition, $GP(n,k)$ is a trivalent graph of order $2n$ and size $3n$. It can be seen that when $n$ is even and $k = n/2$ the resulting graph is not cubic. And because of the obvious isomorphism  $GP(n,k)\cong GP(n,n-k)$, $k <n/2$.  In $GP(n, k)$, there exist one \textit{outer cycle} and  one or more \textit{inner cycles}. 
In this paper we may refer even (odd) subscripted vertices by even (odd) vertices.\begin{theorem}\cite{frucht1971groups}
If $d$ denotes the greatest common divisor of $n$ and $k$, then the set of inner edges generates a subgraph which
is the union of $d$ pairwise-disjoint $(n/d$)-cycles. 
\end{theorem}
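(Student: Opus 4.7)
The plan is to identify the subgraph induced by the inner edges with a Cayley graph on the cyclic group $\mathbb{Z}_n = \mathbb{Z}/n\mathbb{Z}$, and to read off its structure from elementary facts about cyclic subgroups. Specifically, I would identify the inner vertex $v_i$ with $i \in \mathbb{Z}_n$; then the inner edges are exactly the pairs $\{i, i+k\}$, which is the Cayley graph $\mathrm{Cay}(\mathbb{Z}_n, \{\pm k\})$. This viewpoint makes both the component structure and the cyclic structure transparent.

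First I would determine the components. From $v_i$, the vertices reachable along inner edges are precisely $\{v_{i + mk \bmod n} : m \in \mathbb{Z}\}$, i.e.\ the coset $i + \langle k \rangle$ inside $\mathbb{Z}_n$. Standard group theory gives $|\langle k \rangle| = n/\gcd(n,k) = n/d$, so the cosets partition the $n$ inner vertices into exactly $d$ classes, each of size $n/d$. Since an inner edge never crosses between cosets, the components of the inner subgraph are exactly these $d$ cosets.

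Second I would show that each component is an $(n/d)$-cycle. Within a fixed coset, the walk
\[
v_i \;\longrightarrow\; v_{i+k} \;\longrightarrow\; v_{i+2k} \;\longrightarrow\; \cdots \;\longrightarrow\; v_{i+((n/d)-1)k} \;\longrightarrow\; v_i
\]
visits each coset member exactly once and closes up, because $(n/d)k \equiv 0 \pmod n$ while no smaller positive multiple of $k$ vanishes mod $n$. Each inner vertex has inner-degree exactly $2$ (its only inner neighbours are $v_{i\pm k}$), so the component is precisely this cycle. The hypothesis $n > 2k$ yields $n/d \ge n/k > 2$, which rules out the degenerate length-$2$ case and guarantees a genuine cycle.

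The conceptual heart of the argument is the identification of the orbit of $0$ under the map $x \mapsto x+k$ with the subgroup $\langle k\rangle$, and the computation $|\langle k\rangle| = n/d$; the remainder is bookkeeping. The only mild subtlety is confirming that each component is a simple cycle rather than a doubled edge, which is precisely where the defining condition $n > 2k$ on the generalized Petersen graph is used.
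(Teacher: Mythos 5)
Your argument is correct and complete: identifying the inner vertices with $\mathbb{Z}_n$ makes the inner subgraph the Cayley graph $\mathrm{Cay}(\mathbb{Z}_n,\{\pm k\})$, the components are the $d$ cosets of $\langle k\rangle$ (each of size $n/d$ since the order of $k$ in $\mathbb{Z}_n$ is $n/\gcd(n,k)$), and each component is $2$-regular and spanned by the closed walk you describe, hence a cycle of length $n/d$; your use of $n>2k$ (via $d\le k$, so $n/d>2$) correctly rules out the degenerate loop and doubled-edge cases. Note, however, that there is no in-paper proof to compare against: the paper states this theorem as a citation to Frucht, Graver and Watkins (1971) and gives no argument of its own, so your coset-orbit proof simply supplies the standard justification for the quoted result.
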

A graph is said to be \textit{vertex-transitive} if its automorphism group acts transitively on the vertex set.
\begin{theorem}\cite{frucht1971groups}
$G(n, k)$ is vertex-transitive if and only if $k^2\equiv \pm 1 (\pmod n)$ or $n = 10$ and $k = 2$
\end{theorem}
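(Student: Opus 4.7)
The plan is to handle the two implications separately: construct an explicit automorphism for sufficiency, and use a local invariant (counts of short cycles through a vertex) for necessity.

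For sufficiency, I would first observe that the rotation $\rho : u_i \mapsto u_{i+1},\ v_i \mapsto v_{i+1}$ (indices mod $n$) is always an automorphism of $GP(n,k)$, and its orbits are exactly $U = \{u_0,\dots,u_{n-1}\}$ and $V = \{v_0,\dots,v_{n-1}\}$. Hence vertex-transitivity is equivalent to exhibiting a single automorphism that carries some $u_j$ to some $v_\ell$. Under the assumption $k^2 \equiv \pm 1 \pmod n$, I would define $\tau(u_i) = v_{ki}$ and $\tau(v_i) = u_{ki}$ and check edge preservation on each of the three edge types: outer edges $u_i u_{i+1}$ map to inner edges $v_{ki} v_{ki+k}$; spokes map to spokes; and inner edges $v_i v_{i+k}$ map to $u_{ki}\, u_{ki+k^2}$. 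The hypothesis $k^2 \equiv \pm 1 \pmod n$ is exactly what is needed for the last image to be an outer edge.

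For necessity, I would compare local invariants at an outer vertex $u_0$ and an inner vertex $v_0$, since any automorphism must preserve them. Theorem 1.1 already gives one such invariant: $u_0$ lies on a cycle of length $n$ (the outer cycle), while $v_0$ lies on a cycle of length $n/\gcd(n,k)$, so $\gcd(n,k)=1$ is forced. The next invariant I would use is the count of short cycles of small length through a vertex, built out of arcs on the outer cycle, arcs on the inner cycle, and exactly two spokes. The shortest such cycles are the ``wickets'' $u_i u_{i+1}\cdots u_{i+k} v_{i+k} v_i u_i$ of length $k+3$, and the analogue through $v_0$ uses a different number of outer versus inner steps. Enumerating these at $u_0$ and at $v_0$ and forcing equality yields, after arithmetic manipulation, a congruence in $n$ and $k$ that collapses to $k^2 \equiv \pm 1 \pmod n$.

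The main obstacle will be the exceptional case $(n,k)=(10,2)$, where $k^2 = 4 \not\equiv \pm 1 \pmod{10}$ yet the graph (the Petersen graph) is vertex-transitive with automorphism group $S_5$. Here the short-cycle invariants happen to coincide at outer and inner vertices by a low-dimensional coincidence, so the counting argument alone cannot rule this case out. I would close the gap by a finite check: after the cycle-count identity has reduced necessity to a small list of candidate pairs $(n,k)$ outside the congruence family, inspect each candidate directly and verify that only $(10,2)$ admits a vertex-transitive structure (realized explicitly by the $S_5$ action on $2$-subsets of a $5$-set), while the remaining candidates fail by a finer automorphism-invariant such as the cycle spectrum.
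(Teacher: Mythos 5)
First, note that the paper does not prove this statement at all: it is quoted from Frucht, Graver and Watkins \cite{frucht1971groups}, so there is no in-paper proof to compare against; your proposal has to stand on its own. The sufficiency half is mostly fine: the rotation $\rho$ makes $U$ and $V$ the two orbits, and your map $\tau(u_i)=v_{ki}$, $\tau(v_i)=u_{ki}$ is indeed an automorphism exactly when $k^2\equiv\pm1\pmod n$ (note $k$ is then invertible mod $n$, so $\tau$ is a bijection). However, your treatment of the exceptional pair is based on a misidentification: $GP(10,2)$ is \emph{not} the Petersen graph and is not the Kneser graph on $2$-subsets of a $5$-set; it is the dodecahedral graph, with automorphism group of order $120$ isomorphic to $\mathbb{Z}_2\times A_5$ (not $S_5$). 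The Petersen graph is $GP(5,2)$, which satisfies $2^2\equiv-1\pmod 5$ and therefore needs no exception. So the explicit vertex-transitive structure you invoke for $(10,2)$ does not exist as described; you would instead argue via the rotation group of the regular dodecahedron (or an ad hoc automorphism exchanging $U$ and $V$).

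The more serious gap is in necessity. Equality of local invariants (counts of short cycles through $u_0$ and through $v_0$) is only a \emph{necessary} condition for an automorphism carrying $U$ to $V$, and your claim that enumerating wicket-type cycles and ``forcing equality yields, after arithmetic manipulation, a congruence that collapses to $k^2\equiv\pm1\pmod n$'' is asserted, not proved --- and it is implausible in that simple form, since the congruence records a global algebraic condition (existence of the swap $\tau$), whereas cycle counts of one fixed length need not encode $k^2\bmod n$ at all; moreover the shortest two-spoke cycles are not always the wickets (for suitable $s,t$ with $tk\equiv\pm s\pmod n$ there are shorter mixed cycles), so the enumeration itself is more delicate than sketched. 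Likewise, the assertion that the counting identity ``reduces necessity to a small list of candidate pairs'' is exactly the hard content of the Frucht--Graver--Watkins theorem: their proof determines the full automorphism group $A(n,k)$ and shows it coincides with the group generated by $\rho$, the reflection, and (when $k^2\equiv\pm1$) $\tau$, except for a short explicit list of exceptional pairs, and only then reads off vertex-transitivity. As written, your necessity argument assumes the finiteness and identifiability of the exceptions rather than establishing them, so the heart of the ``only if'' direction is missing.
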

 The well known \textit{Petersen graph} 
   $GP(5,2)$  is the smallest vertex-transitive graph which is not a Cayley graph\cite{saravzin1997note}. It has many interesting properties and can be taken as counter examples for many conjectures\cite{cruz2014examples, holton1993petersen}. 
The generalized Petersen graphs $GP(n, 1)$ are prisms,  isomorphic to the Cartesian product $C_ n\square K_2$. It can be easily seen that for even values of $n$ i.e, $n=2k$, $GP(2k,2)$ is planar for each $k$. Again Robertson\cite{robertson1968graphs} has shown that $GP(n, 2)$ is Hamiltonian unless $n \equiv 5\pmod 6$.    A set of vertices of the form $\{v_i, u_i,u_{i+1},\ldots,u_{i+n},v_{i+n}\}$
in a generalized Petersen graph is called an \textit{n-wicket}(or simply a \textit{wicket}) \cite{stueckle1984generalized}. 

The number of shortest paths or \textit{geodesics} between two vertices $u$ and $v$ in a graph will be  denoted by $\sigma (u,v)$.\\
\section{Generalized Petersen graph $GP(n,2)$ }
 The graph $GP(n,2)$ is defined for $n\geq 5$. It contains either one or two inner cycles  according as $n$ is odd or even (See Fig.\ref{pet2}).  If $n$ is odd, the inner cycle contains even vertices followed by odd vertices and when $n$ is even there are two inner cycles - the cycle of even vertices and the cycle of odd vertices each having $n/2$ vertices. (See Fig\ref{mob2}). Two inner vertices $v_i$ and $v_j$ are consecutive if $|i-j|=1$ or $|i-j|= -1 \mod(n)$ and  adjacent if $|i-j|=2$ or $|i-j|=-2 \mod(n)$. If $n$ is odd, say $n=2k+1$, then for each vertex $u_i$  there are two eccentric vertices $u_{i\pm k}$ in the outer cycle as diametric vertices. The  vertices on either side of $u_i$ are identical with respect to the metric. When $k$ is increased by one, the eccentric pair advances one more distance away from $u_i$.  If $n$ is even, say $n=2k$, then these diametric vertices coincide to a single vertex. 

\begin{figure}[H]
\centering										\scalebox{.8} 
{
\begin{pspicture}(0,-3.049375)(7.248125,3.049375)
\usefont{T1}{ptm}{m}{n}
\rput(3.651875,2.8509374){\small $u_0$}
\usefont{T1}{ptm}{m}{n}
\rput(1.76,2.36){\small $u_1$}
\usefont{T1}{ptm}{m}{n}
\rput(0.79,0.815){\small $u_2$}
\usefont{T1}{ptm}{m}{n}
\rput(0.79,-1.059){\small $u_3$}
\usefont{T1}{ptm}{m}{n}
\rput(1.9,-2.489){\small $u_4$}
\usefont{T1}{ptm}{m}{n}
\rput(3.811875,-2.8290625){\small $u_5$}
\usefont{T1}{ptm}{m}{n}
\rput(5.76,-2.12){\small $u_6$}
\usefont{T1}{ptm}{m}{n}
\rput(6.48,-0.889){\small $u_7$}
\usefont{T1}{ptm}{m}{n}
\rput(6.44,0.950){\small $u_8$}
\usefont{T1}{ptm}{m}{n}
\rput(5.54,2.1509376){\small $u_9$}
\usefont{T1}{ptm}{m}{n}
\rput(3.9,1.7000002){\small $v_0$}
\usefont{T1}{ptm}{m}{n}
\rput(2.82,1.52){\small $v_1$}
\pscircle[linewidth=0.03,dimen=outer](3.621875,0.00937495){2.6}
\psdots[dotsize=0.15](3.661875,1.5693749)
\psdots[dotsize=0.15](2.701875,1.229375)
\psdots[dotsize=0.15](2.161875,0.42937496)
\psdots[dotsize=0.15](2.181875,-0.63062507)
\psdots[dotsize=0.15](2.801875,-1.390625)
\psdots[dotsize=0.15](4.741875,-1.330625)
\psdots[dotsize=0.15](3.701875,-1.6706251)
\psdots[dotsize=0.15](5.261875,-0.47062504)
\psdots[dotsize=0.15](5.221875,0.58937496)
\psdots[dotsize=0.15](4.581875,1.349375)
\psbezier[linewidth=0.03](2.041875,0.45700195)(2.401875,0.42937496)(3.921875,-0.37062505)(3.6589541,1.5893749)
\psbezier[linewidth=0.03](5.241875,-0.45943964)(4.541875,-0.21062505)(3.5782237,0.20411295)(3.7582684,-1.710625)
\psbezier[linewidth=0.03](2.801875,-1.330625)(3.181875,-0.73062503)(4.081875,-0.05062505)(2.2085114,0.40814745)
\psbezier[linewidth=0.03](5.261875,0.60937494)(4.889655,0.30829436)(3.581875,0.30250755)(4.61708,-1.1906251)
\psbezier[linewidth=0.03](4.721875,-1.3091793)(4.201875,-0.8106251)(3.9499161,0.36937496)(2.801875,-1.350625)
\psbezier[linewidth=0.03](2.581875,1.1863309)(3.0362532,1.3087384)(3.521875,-0.43062505)(4.561875,1.349375)
\psbezier[linewidth=0.03](2.2578497,-0.6859081)(2.508365,-0.33255306)(4.001875,-0.13062505)(2.6819868,1.269375)
\psbezier[linewidth=0.03](3.6319807,1.5893749)(3.901875,1.109375)(3.301875,-0.27062505)(5.221875,0.53600097)
\psbezier[linewidth=0.03](4.547477,1.3389089)(4.421875,0.96937495)(3.5778854,0.02711255)(5.281875,-0.49062505)
\psbezier[linewidth=0.03](3.7972984,-1.7752975)(3.5217564,-1.3862299)(4.241875,0.20937495)(2.2114847,-0.62899673)
\psdots[dotsize=0.15](3.701875,2.569375)
\psdots[dotsize=0.15](2.041875,2.049375)
\psline[linewidth=0.03cm](2.021875,2.069375)(2.721875,1.229375)
\psline[linewidth=0.03cm](3.701875,2.609375)(3.661875,1.629375)
\psdots[dotsize=0.15](1.141875,0.70937496)
\psline[linewidth=0.03cm](1.121875,0.72937495)(2.201875,0.42937496)
\psdots[dotsize=0.15](1.241875,-1.010625)
\psline[linewidth=0.03cm](1.201875,-0.990625)(2.221875,-0.61062497)
\psdots[dotsize=0.15](2.341875,-2.2306252)
\psline[linewidth=0.03cm](2.821875,-1.350625)(2.321875,-2.2306252)
\psdots[dotsize=0.15](3.841875,-2.570625)
\psline[linewidth=0.03cm](3.721875,-1.5306251)(3.821875,-2.530625)
\psdots[dotsize=0.15](5.301875,-1.970625)
\psline[linewidth=0.03cm](4.601875,-1.1706251)(5.301875,-1.970625)
\psdots[dotsize=0.15](6.101875,-0.6906251)
\psline[linewidth=0.03cm](5.181875,-0.45062506)(6.061875,-0.67062503)
\psdots[dotsize=0.15](6.061875,0.92937493)
\psline[linewidth=0.03cm](5.161875,0.54937494)(6.141875,1.009375)
\psdots[dotsize=0.15](5.061875,2.129375)
\psline[linewidth=0.03cm](4.561875,1.3293749)(5.061875,2.129375)
\usefont{T1}{ptm}{m}{n}
\rput(5.22,-0.71999997){ $v_7$}
\usefont{T1}{ptm}{m}{n}
\rput(3.36,-1.7400001){ $v_5$}
\usefont{T1}{ptm}{m}{n}
\rput(2.12,-0.36000004){ $v_3$}
\usefont{T1}{ptm}{m}{n}
\rput(2.18,0.73999995){ $v_2$}
\usefont{T1}{ptm}{m}{n}
\rput(2.44,-1.2199999){ $v_4$}
\usefont{T1}{ptm}{m}{n}
\rput(4.46,-1.58){ $v_6$}
\usefont{T1}{ptm}{m}{n}
\rput(5.32,0.25999996){ $v_8$}
\usefont{T1}{ptm}{m}{n}
\rput(4.84,1.14){ $v_9$}
\end{pspicture} 
}
\centering
\scalebox{.8} 
{
\begin{pspicture}(0,-3.0448437)(7.4,3.0448437)
\pscircle[linewidth=0.03,dimen=outer](3.47375,0.05640635){2.55}
\psdots[dotsize=0.15](3.34375,2.5864062)
\psdots[dotsize=0.15](1.88375,2.0464063)
\psdots[dotsize=0.15](1.16375,1.1064066)
\psdots[dotsize=0.15](0.96375,-0.21359365)
\psdots[dotsize=0.15](1.40375,-1.3735937)
\psdots[dotsize=0.15](2.54375,-2.2735937)
\psdots[dotsize=0.15](5.26375,-1.7735934)
\psdots[dotsize=0.15](5.94375,-0.41359365)
\psdots[dotsize=0.15](5.76375,1.1264063)
\psdots[dotsize=0.15](4.92375,2.1664064)
\psdots[dotsize=0.15](3.44375,1.4264064)
\psdots[dotsize=0.15](4.24375,1.2264066)
\psdots[dotsize=0.15](4.74375,0.6864064)
\psdots[dotsize=0.15](4.90375,-0.15359364)
\psdots[dotsize=0.15](4.52375,-0.9535937)
\psdots[dotsize=0.15](3.68375,-1.3735937)
\psdots[dotsize=0.15](2.90375,-1.2335937)
\psdots[dotsize=0.15](2.30375,-0.71359366)
\psdots[dotsize=0.15](2.08375,0.00640635)
\psdots[dotsize=0.15](2.24375,0.66640633)
\psdots[dotsize=0.15](2.66375,1.1464063)
\psline[linewidth=0.03cm](3.34375,2.5864062)(3.44375,1.4064064)
\psline[linewidth=0.03cm](4.92375,2.1864064)(4.24375,1.2264066)
\psline[linewidth=0.03cm](5.76375,1.1264063)(4.72375,0.6864064)
\psline[linewidth=0.03cm](5.94375,-0.41359365)(4.92375,-0.17359366)
\psline[linewidth=0.03cm](5.26375,-1.7735934)(4.50375,-0.93359363)
\psline[linewidth=0.03cm](3.981456,-2.4372702)(3.706044,-1.3899173)
\psline[linewidth=0.03cm](2.5291781,-2.3515737)(2.9183218,-1.2356137)
\psline[linewidth=0.03cm](1.40375,-1.3535937)(2.30375,-0.6935936)
\psline[linewidth=0.03cm](0.94375,-0.21359365)(2.08375,0.02640635)
\psline[linewidth=0.03cm](1.12375,1.1464063)(2.26375,0.66640633)
\psline[linewidth=0.03cm](1.88375,2.0664062)(2.60375,1.1864064)
\psbezier[linewidth=0.03](3.44375,1.3864063)(3.44375,0.58640635)(3.88375,0.26640636)(4.74375,0.6864064)
\psbezier[linewidth=0.03](2.64375,1.1295791)(3.02375,0.82640636)(3.514636,0.1657046)(4.24375,1.2464063)
\psbezier[linewidth=0.03](2.18375,0.6816686)(3.22075,0.16640635)(3.42375,0.6625726)(3.42375,1.4264064)
\psbezier[linewidth=0.03](2.08375,-0.01359365)(3.28375,0.18640634)(3.08375,0.46640635)(2.66375,1.1264063)
\psbezier[linewidth=0.03](2.20375,0.66640633)(3.68375,0.04640635)(2.5553918,-0.4604508)(2.2962873,-0.6935936)
\psbezier[linewidth=0.03](2.14375,-0.026758205)(3.88375,0.30640635)(2.8896356,-1.3735937)(2.8066363,-1.1675828)
\psbezier[linewidth=0.03](2.32375,-0.6087242)(2.3480413,-0.76324105)(3.4928927,0.58640635)(3.64375,-1.3535937)
\psbezier[linewidth=0.03](2.88375,-1.2535937)(3.412,0.36640635)(4.48375,-0.93359363)(4.4657726,-0.9535937)
\psbezier[linewidth=0.03](3.659431,-1.3135936)(3.6425643,-1.1655158)(3.26375,0.02640635)(4.84375,-0.16262415)
\psbezier[linewidth=0.03](4.458988,-0.9535937)(3.972477,-0.4138468)(3.68375,0.28640634)(4.76375,0.6864064)
\psbezier[linewidth=0.03](4.96375,-0.21359365)(3.26375,0.04640635)(4.28375,1.1664064)(4.2560396,1.2464063)
\usefont{T1}{ptm}{m}{n}
\rput(3.22375,2.8464062){ $u_0$}
\usefont{T1}{ptm}{m}{n}
\rput(1.571875,2.2864063){ $u_1$}
\usefont{T1}{ptm}{m}{n}
\rput(0.70375,1.2864064){ $u_2$}
\usefont{T1}{ptm}{m}{n}
\rput(0.471875,-0.32453114){ $u_3$}
\usefont{T1}{ptm}{m}{n}
\rput(0.871875,-1.4735936){ $u_4$}
\usefont{T1}{ptm}{m}{n}
\rput(2.2375,-2.5135937){ $u_5$}
\usefont{T1}{ptm}{m}{n}
\rput(4.151875,-2.75){ $u_{-5}$}
\usefont{T1}{ptm}{m}{n}
\rput(5.6,-2.10){ $u_{-4}$}
\usefont{T1}{ptm}{m}{n}
\rput(6.4,-0.53359365){ $u_{-3}$}
\usefont{T1}{ptm}{m}{n}
\rput(6.2,1.1864064){ $u_{-2}$}
\usefont{T1}{ptm}{m}{n}
\rput(5.021875,2.4754689){ $u_{-1}$}
\usefont{T1}{ptm}{m}{n}
\rput(3.671875,1.5754689){ $v_0$}
\usefont{T1}{ptm}{m}{n}
\rput(2.671875,-1.0245311){ $v_5$}
\usefont{T1}{ptm}{m}{n}
\rput(1.971875,0.93546885){ $v_2$}
\usefont{T1}{ptm}{m}{n}
\rput(1.871875,0.24640635){ $v_3$}
\usefont{T1}{ptm}{m}{n}
\rput(2.15375,-0.45359364){ $v_4$}
\usefont{T1}{ptm}{m}{n}
\rput(4.134,-1.3245312){ $v_{-5}$}
\usefont{T1}{ptm}{m}{n}
\rput(2.571875,1.465469){ $v_1$}
\usefont{T1}{ptm}{m}{n}
\rput(4.9775,-1.0245311){ $v_{-4}$}
\usefont{T1}{ptm}{m}{n}
\rput(5.07375,-0.42){ $v_{-3}$}
\usefont{T1}{ptm}{m}{n}
\rput(4.97375,0.40640635){ $v_{-2}$}
\usefont{T1}{ptm}{m}{n}
\rput(4.521875,0.95){ $v_{-1}$}
\psdots[dotsize=0.15](3.96375,-2.4335938)
\end{pspicture} 
}

\caption{Labelling : $GP(10,2)$ and $GP(11,2)$}\label{pet2}
\end{figure}
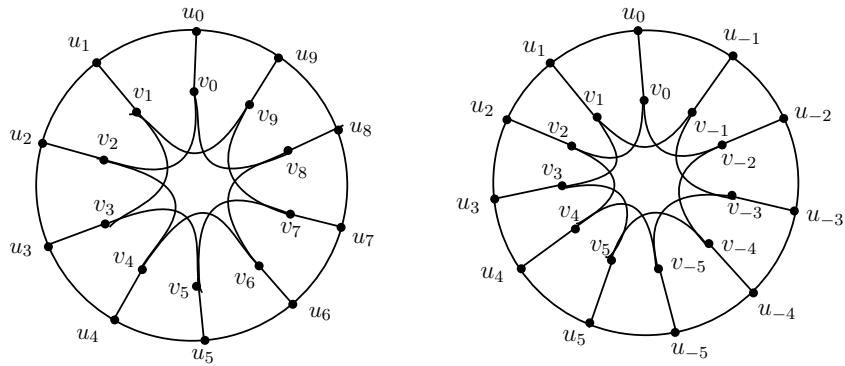
$GP(n,2)$ when $n$ is odd  can be viewed as a M\"{o}bius strip with outer vertices lying in the middle and inner vertices lying on the border.
If $n=2k+1$, it is easy to see that there are $k$ odd vertices lying on the upper ends and $k+1$ even vertices lying on the lower ends of the  strip (See Fig\ref{mob1}). Moving along the middle of the M\"{o}bius strip, the outer vertices comes in a regular manner as $u_0,u_1,u_2,\ldots,u_{2k}$ and along the border of the strip,  odd vertices follow the even vertices. 
 One movement along the edge in the border of the strip is equivalent to two movements along the edges in the middle. Hence shortest paths prefer edges along the border when the number of vertices increases.\\

\begin{figure}[H]
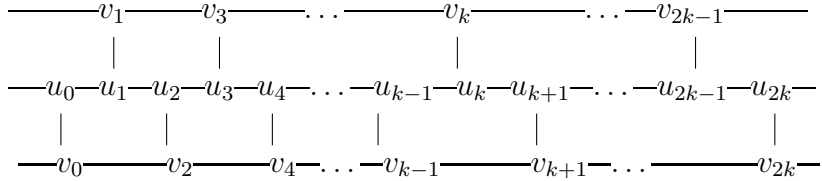


\rule[0.5ex]{1.2cm}{0.4pt}$v_1$\rule[-.3\baselineskip]{0pt}{\baselineskip}\rule[0.5ex]{1cm}{0.4pt}$v_3$\rule[0.5ex]{1cm}{0.4pt}\ldots\rule[0.5ex]{1.3cm}{0.4pt}$v_{k}$\rule[0.5ex]{1.5cm}{0.4pt}\ldots\rule[0.5ex]{0.4cm}{0.4pt}$v_{2k-1}$\rule[0.5ex]{1.1cm}{0.4pt}\\\\
\put(40,10){\line(0,1){12}}\put(80,10){\line(0,1){12}}\put(170,10){\line(0,1){12}}\put(260,10){\line(0,1){12}}
\rule[0.5ex]{0.5cm}{0.4pt}$u_0$\rule[0.5ex]{0.3cm}{0.4pt}$u_1$\rule[0.5ex]{0.3cm}{0.4pt}$u_2$\rule[0.5ex]{0.3cm}{0.4pt}$u_3$\rule[0.5ex]{0.3cm}{0.4pt}$u_4$\rule[0.5ex]{0.3cm}{0.4pt}\ldots\rule[0.5ex]{0.3cm}{0.4pt}$u_{k-1}$\rule[0.5ex]{0.3cm}{0.4pt}$u_{k}$\rule[0.5ex]{0.3cm}{0.4pt}$u_{k+1}$\rule[0.5ex]{0.3cm}{0.4pt}\ldots\rule[0.5ex]{0.3cm}{0.4pt}$u_{2k-1}$\rule[0.5ex]{0.3cm}{0.4pt}$u_{2k}$\rule[0.5ex]{0.3cm}{0.4pt}\\\\
\put(20,10){\line(0,1){12}}\put(60,10){\line(0,1){12}}\put(100,10){\line(0,1){12}}\put(140,10){\line(0,1){12}}\put(200,10){\line(0,1){12}}\put(290,10){\line(0,1){12}}
\rule[-.3\baselineskip]{0pt}{\baselineskip}
\rule[0.5ex]{0.5cm}{0.4pt}$v_0$\rule[0.5ex]{1.1cm}{0.4pt}$v_2$\rule[0.5ex]{1cm}{0.4pt}$v_4$\rule[0.5ex]{0.3cm}{0.4pt}\ldots\rule[0.5ex]{0.3cm}{0.4pt}$v_{k-1}$\rule[0.5ex]{1.2cm}{0.4pt}$v_{k+1}$\rule[0.5ex]{0.3cm}{0.4pt}\ldots\rule[0.5ex]{1.4cm}{0.4pt}$v_{2k}$\rule[0.5ex]{0.3cm}{0.4pt}

\caption{Strip for $GP(2k+1,2)$, when $k$ is odd}\label{mob1}
\end{figure}
\begin{figure}[h]

\centering
\scalebox{.8}
{
\begin{pspicture}(0,-2.88)(6.08,2.88)
\definecolor{color2}{rgb}{0.09411764705882353,0.0392156862745098,0.0392156862745098}
\pscircle[linewidth=0.03,dimen=outer](3.061875,0.0){2.02}
\pscircle[linewidth=0.03,linecolor=color2,dimen=outer](3.071875,0.05){1.01}
\pscircle[linewidth=0.03,linecolor=color2,dimen=outer](3.021875,0.0){2.88}
\psdots[dotsize=0.15,linecolor=color2](3.081875,2.02)
\psdots[dotsize=0.15,linecolor=color2](3.061875,1.02)
\psdots[dotsize=0.15,linecolor=color2](1.601875,1.36)
\psdots[dotsize=0.15,linecolor=color2](1.061875,0.1)
\psdots[dotsize=0.15,linecolor=color2](0.921875,1.96)
\psline[linewidth=0.022cm,linecolor=color2](3.061875,2.06)(3.041875,1.04)
\psline[linewidth=0.022cm,linecolor=color2](0.901875,1.98)(1.561875,1.38)
\psdots[dotsize=0.15,linecolor=color2](2.081875,0.12)
\psdots[dotsize=0.15,linecolor=color2](3.061875,-0.94)
\psdots[dotsize=0.15,linecolor=color2](4.041875,0.14)
\psdots[dotsize=0.15,linecolor=color2](5.061875,0.18)
\psdots[dotsize=0.15,linecolor=color2](3.081875,-2.02)
\psline[linewidth=0.022cm,linecolor=color2](1.041875,0.1)(2.041875,0.12)
\psline[linewidth=0.022cm,linecolor=color2](4.081875,0.12)(5.081875,0.16)
\psline[linewidth=0.022cm,linecolor=color2](3.061875,-0.94)(3.081875,-2.1)
\psdots[dotsize=0.15,linecolor=color2](1.601875,-1.38)
\psdots[dotsize=0.15,linecolor=color2](4.501875,-1.42)
\psdots[dotsize=0.15,linecolor=color2](4.461875,1.46)
\psdots[dotsize=0.15,linecolor=color2](5.141875,1.96)
\psdots[dotsize=0.15,linecolor=color2](5.121875,-1.98)
\psdots[dotsize=0.15,linecolor=color2](0.941875,-1.94)
\psline[linewidth=0.022cm,linecolor=color2](4.441875,1.46)(5.121875,1.94)
\psline[linewidth=0.022cm,linecolor=color2](4.481875,-1.38)(5.061875,-1.96)
\psline[linewidth=0.022cm,linecolor=color2](0.921875,-1.9)(1.621875,-1.34)
\usefont{T1}{ptm}{m}{n}
\rput(4.89,-1.34){ $u_5$}
\usefont{T1}{ptm}{m}{n}
\rput(1.721875,-0.12){ $v_2$}
\usefont{T1}{ptm}{m}{n}
\rput(1.091875,-1.36){ $u_3$}
\usefont{T1}{ptm}{m}{n}
\rput(3.521875,1.12){ $v_0$}
\usefont{T1}{ptm}{m}{n}
\rput(4.381875,-0.12){ $v_6$}
\usefont{T1}{ptm}{m}{n}
\rput(4.931875,1.4){ $u_7$}
\usefont{T1}{ptm}{m}{n}
\rput(5.521875,2.0){ $v_7$}
\usefont{T1}{ptm}{m}{n}
\rput(5.351875,0.16){ $u_6$}
\usefont{T1}{ptm}{m}{n}
\rput(5.321875,-2.16){ $v_5$}
\usefont{T1}{ptm}{m}{n}
\rput(3.071875,-2.28){ $u_4$}
\usefont{T1}{ptm}{m}{n}
\rput(0.461875,-2.08){ $v_3$}
\usefont{T1}{ptm}{m}{n}
\rput(0.61875,0.06){ $u_2$}
\usefont{T1}{ptm}{m}{n}
\rput(0.601875,2.2){ $v_1$}
\usefont{T1}{ptm}{m}{n}
\rput(3.071875,2.24){ $u_0$}
\usefont{T1}{ptm}{m}{n}
\rput(1.171875,1.32){ $u_1$}
\usefont{T1}{ptm}{m}{n}
\rput(3.421875,-1.19){ $v_4$}
\end{pspicture} 
}
\caption{A representation of $GP(2k,2)$ when $k=4$}\label{mob2}
\end{figure}
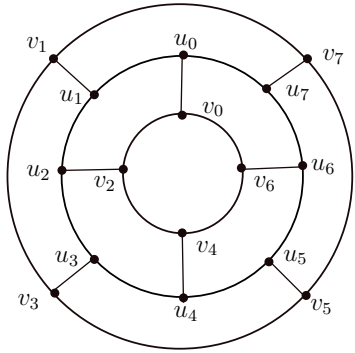
\section{Geodesics in $GP(n,2)$}
\subsection{Geodesics in $GP(n,2)$ when $n$ is odd}
\subsubsection{Number of geodesics between a pair of vertices in the outer cycle of $GP(n,2)$ when $n$ is odd}
In $GP(n,2)$ where $n=2k+1$, by symmetry we consider the vertices $u_0$ and $u_r$ where $1\leq r \leq k,\;k\geq 2 $ and any shortest path joining them may be denoted by $P(u_0,u_r)$.
\begin{lemma}
In $GP(2k+1,2),\,k\geq 2$  for the vertices $u_0$ and $u_r$  in the outer cycle,  $1\leq r \leq k $,  there is always a geodesic joining them contained in the outer cycle for  $r\leq 5$. 
\end{lemma}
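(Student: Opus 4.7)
The plan is to exhibit the outer arc $u_0 u_1 \cdots u_r$ (of length $r$) as a candidate geodesic and then to prove that no walk from $u_0$ to $u_r$ in $GP(2k+1,2)$ has length less than $r$ whenever $r\le 5$ and $r\le k$.

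For the lower bound I would use a displacement count. Along any walk $P$ from $u_0$ to $u_r$, record the effect on the subscript of each edge: an outer edge contributes $\pm 1$, an inner edge contributes $\pm 2$, and a spoke contributes $0$ while switching the outer and inner layers. Let $\alpha$ be the net outer displacement, $2\gamma$ the net inner displacement, and $b$ the number of spokes used. Then $b$ is even (both endpoints are outer) and $\alpha+2\gamma\equiv r\pmod{n}$, while $|P|\ge |\alpha|+|\gamma|+b$ since edges of each type can only cancel in pairs. Writing $\alpha+2\gamma=r+tn$ for some $t\in\mathbb Z$, the triangle inequality gives $|\alpha|+|\gamma|\ge\tfrac12(|\alpha|+2|\gamma|)\ge\tfrac12|r+tn|$.

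I would then split into three cases. If $b=0$, the walk never leaves the outer cycle, so $|P|\ge\min(r,n-r)=r$. If $b\ge 2$ and $t=0$, minimising $|\alpha|+|\gamma|$ over integers with $\alpha+2\gamma=r$ yields $|\alpha|+|\gamma|\ge\lceil r/2\rceil$ (attained at $\gamma=\lfloor r/2\rfloor$), so $|P|\ge\lceil r/2\rceil+2$, which is $\ge r$ precisely for $r\le 5$. If $b\ge 2$ and $t\ne 0$, then $|r+tn|\ge n-r\ge k+1$, so $|\alpha|+|\gamma|\ge\lceil(k+1)/2\rceil$, and together with $b\ge 2$ and the hypothesis $k\ge r$ this gives $|P|\ge\lceil(r+1)/2\rceil+2\ge r$ for $r\le 5$. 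In every case $|P|\ge r$, so the outer arc is a geodesic.

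The only genuine obstacle is the sharpness of the bound $\lceil r/2\rceil+2\ge r$ in the second case: it holds with equality at $r=4,5$ and fails at $r=6$. For instance, in $GP(13,2)$ the walk $u_0 v_0 v_2 v_4 v_6 u_6$ has length $5<6$, so the threshold $r\le 5$ in the statement is tight and the argument carries no slack at $r=5$, which explains why the restriction on $r$ cannot be weakened.
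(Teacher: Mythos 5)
Your argument is correct, and it is a genuinely different route from the paper's. The paper proves the lemma by direct enumeration: for each $r=1,2,3,4,5$ it exhibits the outer geodesic and checks by hand that any competing path through the inner cycle (two spokes plus inner edges) is at least as long, listing the alternative paths explicitly. You instead prove a uniform lower bound on the length of an arbitrary $u_0$--$u_r$ walk via a displacement count: with $\alpha$ the net outer displacement, $2\gamma$ the net inner displacement and $b$ the (even) number of spokes, you get $|P|\ge|\alpha|+|\gamma|+b$ together with $\alpha+2\gamma\equiv r\pmod n$, and the three cases $b=0$, $b\ge 2$ with $t=0$, and $b\ge 2$ with $t\ne 0$ each give $|P|\ge r$ when $r\le 5$ and $r\le k$ (the minimization $|\alpha|+|\gamma|\ge\lceil r/2\rceil$ subject to $\alpha+2\gamma=r$, and the bound $|r+tn|\ge n-r\ge k+1$ in the wrap-around case, are both right). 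What your approach buys is a single argument covering all $r\le 5$ at once, an explicit identification of where the threshold $5$ enters ($\lceil r/2\rceil+2\ge r$ iff $r\le 5$), and a sharpness witness such as $u_0v_0v_2v_4v_6u_6$ in $GP(13,2)$; what the paper's enumeration buys is that, as a by-product, it records the exact number of geodesics for each small $r$, which is then reused in the counting theorem that follows. One stylistic caution: the phrase ``edges of each type can only cancel in pairs'' should be stated as the counting inequalities $N_{\mathrm{outer}}\ge|\alpha|$ and $N_{\mathrm{inner}}\ge|\gamma|$, which is what your bound actually uses.
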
\label{l1}
\begin{proof}
By symmetry, we consider $r$ for $1\leq r \leq k$. 
It is obvious for $r=1$ since $(u_0,u_1)$ makes an outer edge. 
For $r=2$, the only geodesic is the one joining $\{u_0,u_1,u_2\}$ since any path intersecting the inner cycle contains two  spokes and atleast one inner edge. For $r=3$, the only geodesic is the one joining $\{u_0,u_1,u_2,u_3\}$ since the inner vertices $v_0$ and $v_3$ are non-adjacent, they make a path of minimum length 4 joining either $\{u_0,u_1,v_1,v_3,u_3\}$, $\{u_0,v_0,v_2,u_2,u_3\}$ or $\{u_0,v_0,v_5,v_3,u_3\}$ in the case $k=3$. For $r=4$ there are two geodesics of length 4, one joining $\{u_0,u_1,u_2,u_3,u_4\}$ and the other joining $\{u_0,v_0,v_2,v_4,u_4\}$. For $r=5(<k)$, there are three geodesics joining $\{u_0,u_1,u_2,u_3,u_4,u_5\}$, $\{u_0,u_1,v_1,v_3,v_5,u_5\}$ and $\{u_0,v_0,v_2,v_4,u_4,u_5\}$ each of length 5. When $r=5(=k)$, $u_0$ and $u_5$ become the extreme vertices and hence there are four geodesics  between  $u_0$ and $u_5$ including the one passing through the inner cycle in the reverse direction. 
\end{proof}
\begin{lemma}
In $GP(2k+1,2)$, for even $r\leq k$,  there is a unique geodesic joining  $u_0$ and $u_r$ for $r>5$  and it passes through the  spokes at $u_0$ and $u_r$ and the inner vertices lying between $v_0$ and $v_r$.
\end{lemma}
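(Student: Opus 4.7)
The plan is to exhibit the path
\[
P^* \colon\; u_0,\, v_0,\, v_2,\, v_4,\, \ldots,\, v_r,\, u_r
\]
of length $r/2+2$ as a candidate geodesic, and then to show that every $u_0$--$u_r$ walk has length at least this much, with equality forcing the walk to be exactly $P^*$.

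For the lower bound, I would decompose the edges of an arbitrary $u_0$--$u_r$ walk into $s$ spokes, $o$ outer edges, and $i$ inner edges, and further split $o=o_+ + o_-$ and $i=i_+ + i_-$ according to the direction of traversal (increasing versus decreasing subscript, modulo $n$). Because both endpoints lie on the outer cycle and spokes are the only edges between outer and inner, $s$ must be even. Since $r\le k < n/2$, the minimum representative of the net signed displacement is $r$ itself, so
\[
(o_+ - o_-) + 2(i_+ - i_-) = r,
\]
which yields $o + 2i \ge r$ and hence $o + i \ge r/2$. The walk length satisfies $L = s + o + i \ge s + r/2$. If $s = 0$, the walk is confined to the outer cycle and $L \ge r$; if $s \ge 2$, then $L \ge r/2 + 2$. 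Since $r \ge 6$, we have $r/2 + 2 \le r - 1 < r$, so the minimum is $r/2+2$, attained only when $s=2$ and equality holds throughout the above chain.

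For uniqueness, equality forces $s = 2$, $o = 0$, $i = r/2$, with every inner edge traversed in the forward direction. With $o = 0$, the first edge out of $u_0$ must be the spoke $u_0v_0$ and the last edge into $u_r$ must be the spoke $v_ru_r$, which uses both of the available spokes. Between them lie $r/2$ forward inner edges from $v_0$ to $v_r$; because $r \le k$ and the inner cycle has total length $n = 2k+1$, the forward route $v_0, v_2, \ldots, v_r$ is strictly shorter than the backward route around the inner cycle, and is therefore the unique choice. Thus $P^*$ is the unique geodesic from $u_0$ to $u_r$.

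The main obstacle I anticipate is the bookkeeping in the lower bound, specifically justifying that the net signed displacement equals exactly $r$ (ruling out shortest walks that wind once around the outer or inner cycle) and that the parity constraint $o + 2i \equiv r \pmod 2$ forces $o$ to be even, so that $o=0$ is legitimately attainable at optimality. The hypothesis that $r$ is even is essential throughout: it makes $r/2$ an integer and guarantees that the unique forward inner route from $v_0$ to $v_r$ passes only through even-subscripted inner vertices, which is precisely the geometry asserted in the lemma.
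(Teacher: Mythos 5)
Your proposal is correct, but it argues differently from the paper. The paper's proof is a short comparison: any geodesic lying in the outer cycle would have length $r$, while (since $r$ is even) $v_0$ and $v_r$ lie on a unique inner shortest path of length $r/2$, so the route through the two spokes has length $r/2+2<r$ for $r>5$; the paper then concludes directly that the geodesic is this spoke--inner--spoke path, implicitly taking for granted that no other mixed path (e.g.\ one using spokes at other positions, or more than two spokes) can compete, and that uniqueness of the inner path gives uniqueness of the geodesic. You instead prove a genuine lower bound over all $u_0$--$u_r$ walks via the decomposition $L=s+o+i$ with the signed-displacement relation $(o_+-o_-)+2(i_+-i_-)\equiv r \pmod n$, and then extract uniqueness from the equality analysis ($s=2$, $o=0$, $i=r/2$, all inner edges forward). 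This buys rigor the paper does not supply: it rules out all alternative routes at once rather than only the all-outer one. The obstacle you flag about winding is easily closed and does not damage the argument: the displacement is only determined modulo $n$, but any representative other than $r$ has absolute value at least $n-r\geq k+1>r$, so $o+2i\geq\min(r,\,n-r)=r$ holds in every case; similarly, your parity worry about whether $o=0$ is attainable is moot, since attainability is witnessed explicitly by the path $P^*$, and in the equality case $o=0$ is forced by $i\geq r-(o+i)=r/2$ rather than assumed. With those two remarks folded in, your argument is complete and strictly stronger than the paper's sketch.
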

\begin{proof}
Consider $u_r$ where $r$ is even and $r>5$. Let $P(u_0,u_r)$ be any geodesic joining $u_0$ and $u_r$  contained in the outer cycle.  If $P(u_0,u_r)$ is contained in the outer cycle, its length becomes $r$. Since $r$ is even, $v_0$ and $v_r$ are even vertices and they lie on  a unique shortest path of length $r/2$ contained in the inner cycle. Considering the spokes at $v_0$ and $v_r$, the length of the path $P(u_0,u_r)$ becomes $r/2+2<r$ for $r>5$. Thus $P(u_0,u_r)$ passes through the inner vertices and the spokes at $u_0$ and $u_r$ for $r>5$. 
\end{proof}
\begin{lemma}
In $GP(2k+1,2)$, for odd $r$,  there are two geodesics between $u_0$ and $u_r$ for $5<r<k$ and three for $r=k$ all  passing through the inner vertices.
\end{lemma}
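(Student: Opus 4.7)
The plan is to compute $d(u_0,u_r)$ by a parameter count and then enumerate all walks realising it. Write an arbitrary walk from $u_0$ to $u_r$ as $o$ outer edges, $i$ inner edges and $s$ spokes; since the walk begins and ends on the outer cycle, $s$ is even. In the M\"obius-strip picture an outer edge advances the cyclic index by $\pm 1$, an inner edge by $\pm 2$, and a spoke by $0$, so the signed net index-change equals the target $r$ or, after one wrap, $r-n$. Matching parity forces $o$ odd for target $r$ and $o$ even for target $r-n$. Combined with $|\mathrm{net}|\le o+2i$ and $s\ge 2$ whenever $i\ge 1$, the length $L=o+i+s$ satisfies $L\ge (r+o)/2+2\ge (r+5)/2$ (target $r$, minimum at $o=1$) and $L\ge (n-r)/2+2=(2k+5-r)/2$ (target $r-n$, minimum at $o=0$). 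The all-outer path of length $r$ exceeds $(r+5)/2$ for $r>5$, while the two bounds coincide exactly at $r=k$; thus $d(u_0,u_r)=(r+5)/2$, and every geodesic must use at least one inner edge, hence passes through inner vertices.

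The extremal parameters then determine the geodesics. Under the target-$r$ optimum ($o=1$, $i=(r-1)/2$, $s=2$) the single outer edge must be directed toward $u_r$: taking $u_0u_{n-1}$ at the start or $u_{r+1}u_r$ at the end raises the required inner segment from $(r-1)/2$ to $(r+1)/2$ edges and brings the total to $(r+7)/2$. Placing the correctly directed outer edge either at the beginning or at the end of the walk leaves exactly the two paths
\[
u_0,\,u_1,\,v_1,\,v_3,\,\ldots,\,v_r,\,u_r
\qquad\text{and}\qquad
u_0,\,v_0,\,v_2,\,\ldots,\,v_{r-1},\,u_{r-1},\,u_r,
\]
each of length $(r+5)/2$. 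When $r=k$, the wrap-around optimum ($o=0$, $i=(k+1)/2$, $s=2$) is also attained; since $(k+1)/2<n/2$, it is realised uniquely by
\[
u_0,\,v_0,\,v_{n-2},\,v_{n-4},\,\ldots,\,v_k,\,u_k,
\]
which traverses the inner cycle in the opposite direction and supplies the third geodesic.

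What remains, and is the main technical point, is to exclude walks with $s\ge 4$ spokes. If $s=2s_0\ge 4$, the walk decomposes into $s_0$ inner segments and $s_0+1$ outer segments; avoiding backtracking at a spoke forces each inner segment and each of the $s_0-1$ middle outer segments to have length at least $1$, so $i\ge s_0$ and $o\ge s_0-1$. The maximum signed index-change attainable by such a walk of length $(r+5)/2$ is then $o+2i=L-2s_0+i\le 2L-5s_0+1=r+6-5s_0\le r-4$. Since $r-4$ is strictly less than both $r$ and $n-r=2k+1-r$ throughout $5<r\le k$, no such walk can end at $u_r$, so none of them is a geodesic. The step I expect to require the most care is the clean bookkeeping of signs, parities and the wrap correction when $r=k$, together with the direction argument that pins down exactly two placements of the outer edge in the two-spoke analysis.
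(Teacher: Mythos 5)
Your argument is correct, and it is a more systematic route than the paper's. The paper's proof simply asserts the structural facts: any geodesic for odd $r$ with $5<r<k$ "contains exactly two spokes, either at $u_0$ and $u_{r-1}$ or at $u_1$ and $u_r$", of length $\tfrac{r-1}{2}+3$, justified only by the comparison $r>\tfrac{r-1}{2}+3$; the third geodesic at $r=k$ is obtained from the inner-cycle distance $k-\tfrac{r-1}{2}$ between $v_0$ and $v_r$ in the reverse direction, which equals $\tfrac{r-1}{2}+1$ precisely when $r=k$. You instead parametrize an arbitrary walk by its numbers $(o,i,s)$ of outer edges, inner edges and spokes, and use parity of $o$, the displacement bound $|\mathrm{net}|\le o+2i$, and $s\ge 2$ to pin down the distance $\tfrac{r+5}{2}$, to force $o=1,\ i=\tfrac{r-1}{2},\ s=2$ (respectively $o=0,\ i=\tfrac{k+1}{2},\ s=2$ in the wrap-around case at $r=k$), and to enumerate the geodesics; your separate exclusion of $s\ge 4$ via the segment count is sound, though in fact it is already implied by the inequality $o+2s\le 5$ that falls out of combining $i\ge\frac{|\mathrm{net}|-o}{2}$ with $L=\tfrac{r+5}{2}$. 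What your approach buys is a genuine proof of exhaustiveness --- exactly two (resp.\ three) geodesics and no others --- which the paper leaves at the level of "it can be easily seen", and it yields the distance formula as a by-product; the cost is longer bookkeeping, and one small loose end you should state explicitly: the net index change is only determined modulo $n$, so besides $r$ and $r-n$ one must (trivially) dismiss residues $r\pm mn$ with $m\ge 1$ wraps, which force $|\mathrm{net}|\ge n+r$ and hence length at least $\tfrac{n+r}{2}>\tfrac{r+5}{2}$.
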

\begin{proof}
When $r$ is odd and $5<r<k$, it can be easily seen that $P(u_0,u_r)$ contains exactly two spokes either at $u_0$ and $u_{r-1}$ or at $u_1$ and $u_{r}$ having length $\frac{r-1}{2}+3$. Otherwise $r<\frac{r-1}{2}+3$, a contradiction. When $r$ is odd, the distance between $v_0$ and $v_r$ in the inner cycle is $k-\frac{r-1}{2}$.  When $r=k$, including the spokes at $u_0$ and $u_r$, the distance becomes
$\frac{r-1}{2}+3$. Thus there are three geodesics between $u_0$ and $u_r$ when $r=k$.
\end{proof}
\begin{proposition}
In $GP(2k+1,2)$, $k\geq 2$, for the vertices $u_0$ and $u_r$  in the outer cycle,  $1\leq r \leq k$,  there is no geodesic joining them contained in the outer cycle for $r> 5$. 
\end{proposition}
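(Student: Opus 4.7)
The plan is to read this proposition as the direct counterpart to Lemma 3.1 and derive it as a near-immediate consequence of Lemmas 3.2 and 3.3. First, I would observe that for $1\leq r\leq k$ in $GP(2k+1,2)$, any path from $u_0$ to $u_r$ lying entirely in the outer cycle has length exactly $r$ (the shorter arc of the outer $n$-cycle). Thus proving the proposition reduces to exhibiting, for each $r>5$, a strictly shorter $u_0\!-\!u_r$ path that leaves the outer cycle.

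Next I would split into the parity cases handled by the earlier lemmas. For even $r$ with $r>5$ (so $r\geq 6$), Lemma 3.2 produces a path through the two spokes at $u_0,u_r$ together with the inner $v_0\!-\!v_r$ path, of total length $r/2+2$; the inequality $r/2+2<r$ holds precisely when $r>4$, and so is satisfied. For odd $r$ with $5<r<k$, Lemma 3.3 gives two geodesics of length $(r-1)/2+3=(r+5)/2$, and $(r+5)/2<r$ exactly when $r>5$; for $r=k$ Lemma 3.3 additionally produces a third such geodesic of the same length, so again a path strictly shorter than $r$ exists.

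Combining the two cases, for every $r>5$ with $1\leq r\leq k$ there exists a $u_0\!-\!u_r$ path of length strictly less than $r$ that uses at least one spoke and at least one inner edge; in particular, any geodesic has length less than $r$ and hence cannot be contained in the outer cycle, which proves the claim.

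There is no real obstacle here: the statement is essentially a corollary of Lemmas 3.2 and 3.3, and the only substantive step is the trivial arithmetic verification that $r/2+2<r$ for even $r\geq 6$ and $(r+5)/2<r$ for odd $r\geq 7$. The only mild subtlety worth flagging in the write-up is the boundary case $r=k$ with $r$ odd, where one must confirm (as Lemma 3.3 does) that the inner-cycle distance $k-(r-1)/2$ from $v_0$ to $v_r$ still yields a total length $(r+5)/2$ rather than something larger, so that the strict inequality is preserved.
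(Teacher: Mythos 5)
Your proposal is correct and follows essentially the paper's own route: the paper states this proposition without a separate proof, treating it as an immediate consequence of Lemmas 3.2 and 3.3, which is exactly what you make explicit via the comparisons $r/2+2<r$ for even $r>5$ and $(r+5)/2<r$ for odd $r>5$. Your added care about the odd boundary case $r=k$ and about the longer outer arc is fine but not a departure from the paper's argument.
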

\begin{theorem}
If $u_i$ and $u_j$ are any two distinct vertices in the outer cycle of $GP(2k+1,2)$ where $|i-j|=r\leq k$, then the number of geodesics $\sigma(u_i,u_j)$ between $u_i$ and $u_j$  is given by\\
\begin{equation*}
 \sigma(u_i,u_j)=\begin{cases}
   1&\hbox{ for }r=1,2,3 \\
   2&\hbox{ for }r=4\\
   3&\hbox{ for }r=5;\;r<k\\
  4&\hbox{ for }r=5;\;r=k\\
  1&\hbox{ for }r=6,8,10,\ldots\\
  2&\hbox{ for }r=7,9,11,\ldots;\;r<k\\
  3&\hbox{ for }r=7,9,11,\ldots;\;r=k
  \end{cases}
\end{equation*} 
\end{theorem}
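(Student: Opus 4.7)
The plan is to combine the three preceding lemmas together with the Proposition into a single piecewise formula; no new geometric content is required, just careful bookkeeping of what each lemma contributes. I first reduce the general pair $(u_i, u_j)$ to the normalized pair $(u_0, u_r)$ with $r = |i-j| \leq k$. This uses the rotation automorphism $u_t \mapsto u_{t+i}$, $v_t \mapsto v_{t+i}$ of $GP(2k+1,2)$, which shows $\sigma(u_i, u_j) = \sigma(u_0, u_{j-i})$, together with the reflection $u_t \mapsto u_{-t}$, $v_t \mapsto v_{-t}$, which lets me assume the second subscript lies in $\{1, \ldots, k\}$.

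Next I case-split on $r$. For $1 \leq r \leq 5$, the first lemma gives the explicit enumeration with counts $1, 1, 1, 2$, and finally $3$ (for $r = 5 < k$) or $4$ (for $r = 5 = k$); these account for the first four cases of the piecewise definition. For $r > 5$ even, the second lemma shows there is a unique geodesic passing through the two spokes at $u_0$ and $u_r$ and the unique inner geodesic between $v_0$ and $v_r$; combined with the Proposition (ruling out outer-only geodesics for $r > 5$) this gives $\sigma = 1$. For $r > 5$ odd, the third lemma furnishes either two geodesics when $r < k$, arising from the two choices of spoke placement $\{u_0, u_{r-1}\}$ versus $\{u_1, u_r\}$, or three geodesics when $r = k$, the extra one exploiting that the diametric position admits a second inner arc of the same length.

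The main delicacy to guard against is the jump at $r = k$ in the odd case, where a pair of diametric outer vertices gains an additional geodesic through the inner cycle going in the reverse direction, mirroring the behavior already noted at $r = 5 = k$. The step that pins down when exactly three (rather than two) geodesics occur is matching the length $\frac{r-1}{2} + 3$ of the mixed spoke-inner paths against the inner-cycle distance $k - \frac{r-1}{2}$ from the third lemma; equality of these lengths is precisely what produces the extra geodesic. Once these pieces are laid out, the piecewise formula in the theorem follows by direct substitution into the cases, and invariance of $\sigma$ under the symmetry reduction carries the conclusion back to arbitrary $u_i, u_j$.
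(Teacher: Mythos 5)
Your proposal is correct and takes essentially the same route as the paper: normalize to the pair $(u_0,u_r)$ by symmetry and assemble the counts case by case from the preceding lemmas and the proposition (outer-cycle cases $r\le 5$; unique inner geodesic for even $r>5$; two mixed geodesics for odd $5<r<k$ plus the extra reverse-direction geodesic at $r=k$), which is exactly how the paper's proof proceeds. One minor slip in your wording: the length comparison producing the extra geodesic at $r=k$ is between $\frac{r-1}{2}+3$ and the reverse route's total length $\bigl(k-\frac{r-1}{2}\bigr)+2$ (the two spokes must be added to the inner-cycle distance $k-\frac{r-1}{2}$), and these coincide precisely when $r=k$.
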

\begin{proof}
By symmetry, we consider $r\leq k$. When $n$ is odd, there exists only one inner cycle in $GP(n,2)$. There is a unique geodesic between $u_i$ and $u_{i+r}$ for $r=1,2,3$ lying on the outer cycle (Lemma \ref{l1}) and two geodesics joining $u_i$ and $u_{i+4}$ namely, \begin{center}
$\{u_i,u_{i+1},u_{i+2},u_{i+3},u_{i+4}\}$ and $\{u_i,v_i,v_{i+2},v_{i+4},u_{i+4}\}$
\end{center} 
When $r=5(=k)$, the vertices $u_i$ and $ u_{i+5}$ becomes diametric pair on the outer cycle and hence there are four geodesics  joining them namely, $\{u_i,u_{i+1},u_{i+2},u_{i+3},u_{i+4},u_{i+5}\}$  lying on the outer cycle and $\{u_i,u_{i+1},v_{i+1},v_{i+3},v_{i+5},u_{i+5}\}$, $\{u_i,v_i,v_{i+2}$, $v_{i+4},u_{i+4},u_{i+5}\}$, through inner cycle in the forward direction and $\{u_i,v_i,v_{i-2},v_{i-4},v_{i-6},u_{i-6}(=u_{i+5})\}$ in the reverse direction. 

But when $r<k$, the path  in the reverse direction does not become a geodesic. Therefore there are only three geodesics between $u_i$ and $u_{i+5}$ when $r<k$. 

When $r>5$, all geodesics pass through the inner cycle and no geodesic lies entirely on the outer cycle. 

When $r(>5)$ is even, $u_i$ and $u_{i+r}$ have the same parity and hence there exists only one geodesic joining $u_i$ and $u_{i+r}$ namely, $$\{u_i,v_{i},v_{i+2},v_{i+4},\ldots,v_{i+r},u_{i+r}\}$$

When $r$ is odd and  $5<r< k$, there are two geodesics namely, \begin{center}
$\{u_i,u_{i+1},v_{i+1},v_{i+3},\ldots,v_{i+r},u_{i+r}\}$ and  $\{u_i,v_{i},v_{i+2},v_{i+4},\ldots,v_{i+r-1},u_{i+r-1},u_{i+r}\}$.
\end{center} 

When $r=k$, there is one more geodesics in the reverse direction i.e, $$\{u_i,v_i,v_{i-2},v_{i-4},\ldots,v_{i-r-1},u_{i-r-1}(=u_{i+r})\}$$  
\end{proof}

\subsubsection{Number of geodesics between a pair of vertices in the inner cycle of $GP(n,2)$ when $n$ is odd}
\begin{theorem}
If $v_i$ and $v_j$ are any two distinct vertices in the inner cycle of $GP(2k+1,2)$ where  $|i-j|=r\leq k$,
then number of geodesics $\sigma(v_i,v_j)$ between $v_i$ and $v_j$ is given by\\
\begin{equation*}
\sigma(v_i,v_j)=\begin{cases}
      
   1 &\hbox{for even } r\\
   1 &\hbox{for odd } r \hbox{, } r>k-2\\
  \frac{r+1}{2} &\hbox{for odd } r \hbox{, } r<k-2\\
  \frac{r+3}{2} &\hbox{for odd } r \hbox{, }r=k-2
\end{cases}
\end{equation*}
\end{theorem}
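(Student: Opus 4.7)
The plan is to reduce to $v_0, v_r$ with $1\le r\le k$ by rotational and reflective symmetry, compute the minimum path length via both inner-only and spoke-using strategies, and then count the extremal paths in each case.

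\emph{Inner-only walks.} Since $n=2k+1$ is odd, the inner edges form a single cycle $v_0,v_2,\ldots,v_{2k},v_1,v_3,\ldots,v_{2k-1},v_0$. A bookkeeping of positions (place $v_s$ at position $s/2$ for $s$ even and at $k+(s+1)/2$ for $s$ odd) shows that the inner-cycle distance from $v_0$ to $v_r$ is $r/2$ for $r$ even (forward), realised by exactly one walk, and $k-(r-1)/2$ for $r$ odd (backward, since the forward trip costs $k+(r+1)/2$), also realised by exactly one walk.

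\emph{Walks using spokes.} Let a path's signed inner- and outer-edge counts be $a_{\pm}, b_{\pm}$ and let $c$ be its spoke count. Both endpoints being inner forces $c$ even with $c\ge 2$, and the displacement constraint is $2(a_+-a_-)+(b_+-b_-)=r$ (walking the other way is strictly worse since $n-r>k$). Because $a_++a_-\ge |a_+-a_-|$ and similarly for $b$, the length $L=a_++a_-+b_++b_-+c$ is minimised by taking $a_-=b_-=0$ and then maximising $a_+$ subject to $2a_++b_+=r$ with $b_+$ sharing the parity of $r$. This yields minimum mixed lengths $(r+5)/2$ for odd $r$ (achieved by $a_+=(r-1)/2$, $b_+=1$, $c=2$) and $r/2+3$ for even $r$ (achieved by $a_+=(r-2)/2$, $b_+=2$, $c=2$).

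\emph{Comparison and counting.} For $r$ even, $r/2<r/2+3$, so the unique inner walk is the geodesic and $\sigma=1$. For $r$ odd, compare $k-(r-1)/2$ with $(r-1)/2+3$: the mixed route wins strictly when $r<k-2$, the two tie when $r=k-2$, and the backward inner route wins strictly when $r>k-2$. Every mixed geodesic has the explicit shape
\[
v_0\to v_2\to\cdots\to v_{2s}\to u_{2s}\to u_{2s+1}\to v_{2s+1}\to v_{2s+3}\to\cdots\to v_r,
\]
parametrised by $s\in\{0,1,\ldots,(r-1)/2\}$, giving exactly $(r+1)/2$ such geodesics; adding the unique backward inner geodesic in the tie case produces $(r+3)/2$. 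The main obstacle is the systematic exclusion of exotic shortest paths—those with $c\ge 4$, with mixed directions on a single cycle, or wrapping through the outer cycle—which are all dispatched by the bound $L\ge r-a_++c$ obtained in the previous paragraph together with the observation that any wrap contributes at least $n-r\ge k+1$.
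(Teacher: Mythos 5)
Your proposal is correct and follows essentially the same route as the paper: for odd $r$ the geodesics are exactly the $(r+1)/2$ wicket-crossing paths parametrised by the crossing position $s$, plus the unique reverse inner-cycle path when $r\geq k-2$, and the comparison of $\frac{r+5}{2}$ with $k-\frac{r-1}{2}$ yields the thresholds $r<k-2$, $r=k-2$, $r>k-2$. The only real difference is presentational: you force the wicket shape via an explicit edge-count optimisation where the paper simply asserts it (and your computation also gives the correct mixed length $\frac{r-1}{2}+3$, where the paper's proof has a small slip writing $\frac{r+1}{2}+3$).
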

\begin{proof}
When $n$ is odd, there exists only one inner cycle in $GP(n,2)$ containing all even vertices followed by all odd vertices.\\

 When $n=2k+1$, there are $k+1$ even vertices and $k$ odd vertices in the inner cycle. Consider two consecutive inner vertices $v_i$ and $v_{i+1}$.  Since $v_i$ and $v_{i+1}$ are non adjacent inner vertices, there is a unique geodesic between $v_i$ and $v_{i+1}$ passing through the 1-wicket $\{v_i,u_i,u_{i+1},v_{i+1}\}$  containing two spokes and an outer edge.\\
 
  When $r$ is even, both $v_i$ and $v_{i+r}$ have the same parity and therefore  there is a unique geodesic $P(v_i,v_{i+r})$ of length $r/2$ namely $\{v_i,v_{i+2},v_{i+4},\ldots,v_{i+r}\}$.\\
  
   When $r$ is odd, $r<k-2$; $v_i$ and $v_{i+r}$ have opposite parity and a geodesic from $v_i$ to $v_{i+r}$ passes through the 1-wicket at any one of the consecutive pairs\\ $(v_i, v_{i+1}),(v_{i+2}, v_{i+3}),\ldots, (v_{i+r-1},v_{i+r})$ 
and thus there exist $\frac{r+1}{2}$ geodesics joining $v_i$ and $v_{i+r}$ having length $ \frac{r+1}{2}+3$. When $r=k-2$, the pair $(v_i,v_{i+r})$  lies  sufficiently apart so that there is one more geodesic in the reverse direction. When $r>k-2$, the geodesic in the reverse direction alone exists.
\end{proof}

\subsubsection{Number of geodesics between a pair of vertices in the outer and inner cycle of $GP(n,2)$ when $n$ is odd}
\begin{theorem}
If $u_i$ and $v_j$ are any two  vertices in the outer and inner cycles respectively of $GP(2k+1,2)$ where  $|i-j|=r\leq k$,
then the number of geodesics  $\sigma(u_i,v_j)$ between $u_i$ and $v_j$ is given by\\
\begin{equation*}
\sigma(u_i,v_j)=\begin{cases}     
 1&\hbox{ for} \; r<k\\
 1&\hbox{for even } r\hbox{, } \; r=k\\
 2&\hbox{for odd } r\hbox{,} \; r=k
\end{cases}
\end{equation*}
\end{theorem}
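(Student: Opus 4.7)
The plan is to reduce to a canonical pair of vertices via the rotational automorphism $(u_\ell,v_\ell)\mapsto(u_{\ell+i},v_{\ell+i})$ together with the reflection $(u_\ell,v_\ell)\mapsto(u_{-\ell},v_{-\ell})$, so that it suffices to count $u_0v_r$-geodesics for $1\leq r\leq k$. Since any path from an outer to an inner vertex crosses an odd number of spokes, and any path with three or more spokes is at least two longer than the best one-spoke path realising the same net outer-index displacement, the first step is to argue that for $r\leq k$ every geodesic uses exactly one spoke. This reduces the problem to enumerating paths of the form $u_0\to\cdots\to u_a\to v_a\to\cdots\to v_r$, parametrised by the spoke location $u_av_a$ and the direction of travel along the single inner cycle.

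For such a one-spoke path, the length equals $|a|+1+D(a,r)$, where $D(a,r)$ is the $v_av_r$-distance in the inner cycle: $D(a,r)=|r-a|/2$ when $r-a$ is even, and $D(a,r)=k-(|r-a|-1)/2$ when $r-a$ is odd, since the inner $(2k+1)$-cycle visits all even-indexed inner vertices before wrapping to the odd-indexed ones. Both inner directions must be considered, the long-way length being $(2k+1)-D(a,r)$. A monotonicity check in $|a|$, using that a unit change in $a$ alters the outer leg by $1$ but can shorten the short-way inner leg by at most $1/2$ in the same-parity case or changes it by nearly $k$ in the parity-swap case, shows that the only candidates for the minimum are $a\in\{0,1\}$.

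The case split is on the parity of $r$. For even $r$ the unique minimiser is $a=0$ with forward inner travel, giving the geodesic $u_0\to v_0\to v_2\to\cdots\to v_r$ of length $r/2+1$; the alternative $a=1$ lands on $v_1$, of opposite parity to $v_r$, at inner distance $k-(r-2)/2$, for a total of $k-r/2+3>r/2+1$, and the backward direction at $a=0$ costs $2k+2-r/2$. Hence $\sigma(u_0,v_r)=1$. For odd $r$ the two candidate minima are the forward one-spoke path at $a=1$, of length $(r+3)/2$, and the backward one-spoke path at $a=0$, of length $(2k-r+3)/2$; equating these yields $r=k$. So for odd $r<k$ the forward candidate is strictly shorter and gives a unique geodesic, while for odd $r=k$ both realise the distance $(k+3)/2$, producing exactly two geodesics.

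The main obstacle is verifying that the enumerated candidates account for every geodesic, i.e.\ that no other spoke position and no multi-spoke path achieves the minimum. The one-spoke part follows from the monotonicity check above plus a finite check on small $|a|$. For paths with three or more spokes, the length is bounded below by $3+|x|+|y|$ with $x+2y\equiv r\pmod{2k+1}$ the net index displacement, which is at least two more than the corresponding one-spoke lower bound and hence strictly larger than the one-spoke minimum.
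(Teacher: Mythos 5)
Your count is correct and, at bottom, you identify exactly the same geodesics as the paper does: for even $r$ the path through the spoke at $u_0$ and the even inner vertices, for odd $r$ the path through the outer edge $u_0u_1$, the spoke at $u_1$ and the inner cycle, and, when $r=k$ is odd, the additional reverse-direction path through the spoke at $u_0$; your tie $(r+3)/2=(2k-r+3)/2$ precisely at $r=k$ is the paper's remark that the reverse geodesic appears only ``in the extreme case''. The difference is one of rigor rather than of route: the paper merely exhibits these paths and asserts uniqueness, whereas you reduce to one-spoke paths via the net-displacement bound $s+|x|+|y|$ with $x+2y\equiv r\pmod{2k+1}$ and then minimise $|a|+1+D(a,r)$ over spoke positions, which is what actually justifies the counts $1,1,2$. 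One step of your reduction is stated loosely and should be repaired: a unit change of $a$ always flips the parity of $r-a$, so the ``same-parity'' monotonicity has to be run in steps of $2$ (costing $2$ on the outer leg while saving at most $1$ on the inner leg), and the surviving candidates are then the smallest $|a|$ in each parity class, namely $a\in\{0,1,-1\}$ rather than $\{0,1\}$; in fact for even $r$ the strongest competitor is $a=-1$, of length $k+2-r/2$, which beats the $a=1$ value $k+3-r/2$ you checked, and for odd $r$ it gives $(r+5)/2$. Since $k+2-r/2>r/2+1$ for all $r\le k$ and $(r+5)/2>(r+3)/2$, this extra candidate never ties the minimum, so your conclusion stands once that omission is filled in.
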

\begin{proof}
When $n$ is odd, there exists only one inner cycle in $GP(n,2)$ containing all even vertices followed by all  odd vertices.  Consider $u_i$ and $v_{i+r}$. If both vertices are either even or odd, $v_i$ and $v_{i+r}$ are also the same and there is a unique geodesic of length $r/2+1$ passing through the spoke $(u_i,v_i)$ joining $v_i$ and $v_{i+r}$ along the inner cycle . Otherwise the geodesic passes through the outer edge $(u_i,u_{i+1})$,  the spoke $(u_{i+1},v_{i+1})$ and joins $v_{i+1}$ to $v_{i+r}$ along the inner cycle. It is of length $\frac{r-1}{2}+2$. In the extreme case $r=k$, there is one more geodesic in the reverse direction passing through the spoke $(u_i,v_i)$.
\end{proof}
\subsection{Geodesics in $GP(n,2)$ when $n$ is even}
\subsubsection{Number of geodesics between a pair of vertices in the outer cycle of $GP(n,2)$ when $n$ is even}
\begin{theorem}
If $u_i$ and $u_j$ are any two distinct vertices in the outer cycle of $GP(2k,2)$,  $k\geq 3$ where $|i-j|=r\leq k$, then the number of geodesics $\sigma(u_i,u_j)$ between $u_i$ and $u_j$ is given by\\
\begin{equation*}
\sigma(u_i,u_{i+r})=\begin{cases}   
   1&\hbox{ for } r=1,2,3;\;r<k \\
   2&\hbox{ for } r=3;\;r=k\\
   2&\hbox{ for } r=4;\;r<k\\
   4&\hbox{ for } r=4;\;r=k\\
   3&\hbox{ for } r=5;\;  r<k\\
   6&\hbox{ for } r=5;\;r=k\\
   1&\hbox{ for } r=6,8,10,\ldots;\;r<k\\
   2&\hbox{ for }  r=6,8,10,\ldots;\;r=k\\
   2&\hbox{ for }  r=7,9,11,\ldots;\;r<k\\
   4&\hbox{ for }  r=7,9,11,\ldots;\;r=k
\end{cases}
\end{equation*}
\end{theorem}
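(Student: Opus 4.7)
The plan is to parallel the analysis already carried out for odd $n$, while adapting to the two structural changes in $GP(2k,2)$: the inner vertices split into two disjoint $k$-cycles (an \emph{even inner cycle} $E=(v_0,v_2,\ldots,v_{2k-2})$ and an \emph{odd inner cycle} $O=(v_1,v_3,\ldots,v_{2k-1})$), and when $r=k$ the vertex $u_k$ is antipodal to $u_0$ on the outer cycle. By the rotational symmetry $u_i\mapsto u_{i+1},v_i\mapsto v_{i+1}$, I take $i=0$ and count shortest paths from $u_0$ to $u_r$ for $1\le r\le k$. The central tool for the $r=k$ case is the reflection automorphism $\tau$ defined by $\tau(u_i)=u_{-i}$ and $\tau(v_i)=v_{-i}$ (indices mod $2k$); it fixes $u_0$ always, fixes $u_r$ exactly when $r=k$, and therefore maps each $u_0$-to-$u_k$ geodesic bijectively to another.

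First I would enumerate \emph{forward} geodesics (those whose outer vertices have monotonically increasing indices) by cases on $r$. For $r\le 5$ the arguments of Lemma 2.1 transfer verbatim, the only adjustment being that for odd $r$ the inner endpoints $v_0$ and $v_r$ lie on different inner cycles, so every inner detour incurs an extra spoke-outer-spoke transition. For $r\ge 6$ the outer path is strictly longer than an inner detour, by the inequalities $r/2+2<r$ for even $r\ge 6$ and $(r-1)/2+3<r$ for odd $r\ge 7$, so no geodesic lies entirely on the outer cycle. If $r$ is even then $v_0,v_r\in E$ and the unique forward geodesic is $u_0,v_0,v_2,\ldots,v_r,u_r$ of length $r/2+2$. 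If $r$ is odd then $v_0\in E$, $v_r\in O$, and there are exactly two forward geodesics of length $(r-1)/2+3$, according to which of the two possible spoke transitions $(u_0,u_1)$ or $(u_{r-1},u_r)$ is used. This yields the forward count $1$ for $r=1,2,3$, $2$ for $r=4$, $3$ for $r=5$, $1$ for even $r\ge 6$, and $2$ for odd $r\ge 7$, which agrees with the $r<k$ entries of the stated formula.

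For $r=k$, each forward geodesic $\gamma$ produces a distinct \emph{backward} geodesic $\tau(\gamma)$ traversing the opposite side of the graph, so the count doubles. For instance, when $r=k=5$ the reflection sends the three forward geodesics to $u_0,u_9,u_8,u_7,u_6,u_5$, $u_0,v_0,v_8,v_6,u_6,u_5$, and $u_0,u_9,v_9,v_7,v_5,u_5$, yielding six geodesics in total. The doubled counts $2,4,6,2,4$ for $r=3,4,5$, even $r\ge 6$, and odd $r\ge 7$ respectively match the remaining entries of the formula. The main technical step is to verify that $\tau(\gamma)\ne\gamma$ as a vertex set for every enumerated $\gamma$: a $\tau$-invariant $u_0$-to-$u_k$ path of length $\ell$ would have to place a $\tau$-fixed vertex (one of $u_0,u_k,v_0,v_k$) at its midpoint when $\ell$ is even, or a $\tau$-invariant edge at its middle when $\ell$ is odd. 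Inspection of the midpoints of the geodesic families enumerated above rules out both possibilities for $k\ge 3$, completing the case analysis.
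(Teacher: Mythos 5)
Your proposal is correct and takes essentially the same route as the paper: a case analysis on $r$ that directly enumerates the geodesics for $r<k$ (outer path for small $r$, unique inner route for even $r\geq 6$, two spoke-transition routes for odd $r\geq 7$), with the $r=k$ counts obtained by doubling through the reverse direction. The only cosmetic difference is that you formalize the doubling via the reflection automorphism $\tau(u_i)=u_{-i}$, $\tau(v_i)=v_{-i}$ and verify $\tau(\gamma)\neq\gamma$, whereas the paper appeals informally to the symmetry of the even outer and inner cycles in the extreme case.
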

\begin{proof}
 Since $n$ is even, there are two inner cycles - the cycle of even vertices and the cycle of odd vertices. When $r=1,2,3;\;r<k$, there is a unique geodesic between $u_i$ and $u_{i+r}$ lying on the outer cycle and in the extreme case, i.e, when $r=3$, $r=k$, the outer cycle itself makes two geodesics on either sides.\\
 
  When $r=4$, $r<k$, there are two geodesics of length $4$, one over the outer cycle and the other over the inner cycle of even or odd vertices according as $i$ is even or odd by means of  the two spokes at the given vertices. In its extreme case there are two more geodesics passing over the outer cycle and the inner cycle in the reverse direction.\\
  
   When $r=5,\;r<k$, there are $3$ geodesics of length $5$. One lying on the outer cycle and the others passing  through the spokes either at $u_{i+1}$ and $u_{i+5}$ or at $u_i$ and $u_{i+4}$. In its extreme case when $r=k$, there are $3$ more geodesics passing over the outer and inner cycles in the reverse direction.\\
 
  When $r=6,8,\ldots;\;r<k$, there is no geodesic passing over the outer cycle. Since $v_{i}$ and $v_{i+r}$ lie on the same inner cycle, there is a geodesic joining them of length $r/2$. Therefore there is a unique geodesic of length $r/2+2$ joining $u_i$ and $u_{i+r}$ when $r=6,8,\ldots;\;r<k$. In the extreme case there is one more geodesic passing over the inner cycle in the reverse direction.\\
 
  When $r=7,9,\ldots;\;r<k$, $v_i$ and $v_{i+r}$ lie on different inner cycles and therefore there are two geodesics one joining the vertices $\{u_i,u_{i+1},v_{i+1},v_{i+3},\ldots,v_{i+r},u_{i+r}\}$ and the other joining the vertices $\{u_i,v_{i},v_{i+2},v_{i+4},\ldots,v_{i+r-1},u_{i+r-1},u_{i+r}\}$. In the extreme case, reversing the direction over the outer and inner cycles, there lie two more geodesics. 
\end{proof}
\subsubsection{Number of geodesics between a pair of vertices in the inner cycle of $GP(n,2)$ when $n$ is even}
\begin{theorem}
If $v_i$ and $v_j$ are any two distinct vertices in the inner cycle of $GP(2k,2)$,  $k\geq 3$ where $|i-j|=r\leq k$, then the number of geodesics $\sigma(v_i,v_j)$ between $v_i$ and $v_j$ is given by
\begin{equation*}
\sigma(v_i,v_j)=\begin{cases}
1& \hbox{for even }r, r<k\\
2& \hbox{for even }r, r=k\\
\frac{r+1}{2}& \hbox{for odd }r, r<k\\
r+1& \hbox{for odd }r, r=k
\end{cases}
\end{equation*}
\end{theorem}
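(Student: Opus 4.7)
The plan is to split the analysis by the parity of $r$. When $r$ is even the endpoints $v_i$ and $v_{i+r}$ sit in a common inner cycle of length $k$, so one expects every geodesic to live inside this cycle; when $r$ is odd they sit in the two different inner cycles and every geodesic must use a wicket to cross between them.

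For the even--$r$ case, I would let $a$, $b$, $c$ be the numbers of inner edges, outer edges, and spokes used by a candidate path. A walk accounting gives $2a+b \ge r$ (each inner edge covers two subscripts, each outer edge covers one, spokes fix the subscript), while entering and leaving the outer cycle forces $c \ge 2$ in any excursion. Combining these shows that any path that leaves the inner cycle has length at least $r/2+2$, strictly bigger than the in--cycle distance $r/2$. So every geodesic is contained in the inner cycle and the count reduces to counting geodesics in a cycle of length $k$ between vertices at cyclic distance $r/2$: one path when $r/2<k/2$, i.e.\ $r<k$, and two antipodal paths when $r/2=k/2$, i.e.\ $r=k$.

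For odd $r$, each outer edge flips subscript parity while inner edges and spokes preserve it, so $b$ is odd and in particular $b \ge 1$; this also forces $c \ge 2$. Minimising $a+b+c$ subject to $2a+b \ge r$, $b \ge 1$ odd and $c \ge 2$ even (with $c \ge 4$ strictly worsening the bound) yields length at least $(r+5)/2$, attained only by $a=(r-1)/2$, $b=1$, $c=2$. Every geodesic therefore uses a single outer edge $(u_j,u_{j+1})$, the two spokes at its endpoints, and inner--cycle geodesics linking $v_i$ to $v_j$ and $v_{j+1}$ to $v_{i+r}$ (with the appropriate parity choice). I would then enumerate the outer edges that achieve the minimum: the forward family $p\in\{i,i+2,\ldots,i+r-1\}$ produces $(r+1)/2$ geodesics, and a direct comparison of cycle distances shows that for $r<k$ any backward choice makes the return segment strictly longer than $(r-1)/2-t$ and so fails to be a geodesic. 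When $r=k$, however, the two cyclic directions inside each inner cycle are equally short, and the symmetric backward family $p\in\{i,i-2,\ldots,i-(r-1)\}$ contributes another $(r+1)/2$ geodesics, using a disjoint set of outer edges from the forward family.

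The main obstacle will be the lower--bound optimisation for odd $r$---in particular ruling out $c \ge 4$ and the mixed forward/backward inner--cycle traversals when $r<k$---and verifying at $r=k$ that the forward and backward geodesic families are genuinely disjoint. Once these structural facts are in place the totals $(r+1)/2$ for $r<k$ and $r+1$ for $r=k$ follow by direct enumeration.
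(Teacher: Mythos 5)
Your proposal is correct and follows essentially the same route as the paper: split on the parity of $r$ (same inner cycle versus different inner cycles), count the in-cycle geodesics for even $r$, count the wicket crossings at the consecutive pairs for odd $r$, and double the count in the antipodal case $r=k$. The only difference is that you make explicit the edge-accounting lower bound (forcing $a=(r-1)/2$, $b=1$, $c=2$ and ruling out reverse traversals for $r<k$), which the paper's proof leaves implicit.
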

\begin{proof}
When $r$ is even and $r<k$ both the vertices $v_i$ and $v_{i+r}$ lie on the same inner cycle. Therefore, there exists a unique geodesic lying on the same inner cycle. In the extreme case, there is one more geodesic on the reverse side of the inner cycle.\\

 When $r$ is odd and $r<k$ the vertices lie on different inner cycles and hence joined by a wicket containing two spokes and an outer edge at any vertex $v_i,v_{i+2},\ldots,v_{i+r-1}$. Hence there are $\frac{r+1}{2}$ geodesics. In the extreme case the above method can be repeated along the reverse side of the inner cycle.
\end{proof}
\subsubsection{Number of geodesics between a pair of vertices in the outer and inner cycle of $GP(n,2)$ when $n$ is even}
\begin{theorem}
If $u_i$ and $v_j$ are any two  vertices in the outer and inner cycles respectively of $GP(2k,2)$, $k\geq 3$ where $|i-j|=r\leq k$,
then the number of geodesics  $\sigma(u_i,v_j)$ between $u_i$ and $v_j$ is given by
\begin{equation*}
\sigma(u_i,v_j)=\begin{cases}
1&r<k\\2&r=k
\end{cases}
\end{equation*}
\end{theorem}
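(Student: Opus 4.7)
The plan is to use the rotational automorphism $u_i\mapsto u_{i+1}$, $v_i\mapsto v_{i+1}$ to assume $i=0$ and $j=r$ with $1\le r\le k$. Since $n=2k$ is even, the inner edges split the inner vertex set into two disjoint $k$-cycles, one on even-indexed inner vertices and one on odd-indexed ones, as noted at the start of Section~2. Hence $v_0$ and $v_r$ share an inner cycle exactly when $r$ is even, and I would split the argument along this parity dichotomy.

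The key idea is to parametrize any walk from $u_0$ to $v_r$ by the triple $(a,b,c)$ counting outer edges, spokes, and inner edges used; the length equals $a+b+c$. Since the walk begins in the outer cycle and ends in the inner cycle, $b$ must be odd, so $b\ge 1$. A geodesic does not backtrack within any segment lying between consecutive spokes, so up to a single choice of forward or backward orientation the net index displacement is $a+2c$, which must equal $r$ or $2k-r$ modulo $2k$. For $r\le k$ this gives two candidate directions: forward with $a+2c=r$ and backward with $a+2c=2k-r$. Fixing $b=1$ and minimizing $a+c$ subject to the relevant equation together with the forced parity $a\equiv r\pmod 2$ yields
\[
(a,c)=\begin{cases}(0,\,r/2) & r\text{ even},\\(1,\,(r-1)/2) & r\text{ odd},\end{cases}
\]
and then the structure of the walk (a run of outer edges, the single spoke, a run of inner edges) forces the unique forward candidates
\[
u_0\to v_0\to v_2\to\cdots\to v_r \quad\text{and}\quad u_0\to u_1\to v_1\to v_3\to\cdots\to v_r
\]
of lengths $r/2+1$ and $(r+3)/2$ respectively, with analogous backward candidates of length $k-r/2+1$ or $k-(r-1)/2+1$.

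The main obstacle is ruling out walks with $b\ge 3$. The observation I would invoke is that each spoke contributes $0$ to the net index displacement but $1$ to the length, so replacing $b=1$ by any larger odd value preserves the equation $a+2c=r$ (or $a+2c=2k-r$) while adding at least $2$ to the length; such walks cannot beat the $b=1$ minimizer. Once $b=1$ optimality is in hand, comparing the forward and backward candidate lengths finishes the count: for $r<k$ the forward length is strictly smaller and the candidate is unique, giving $\sigma(u_0,v_r)=1$; for $r=k$ the two lengths coincide (both $k/2+1$ when $k$ is even, both $(k+3)/2$ when $k$ is odd), yielding two distinct geodesics that proceed in opposite rotational directions and giving $\sigma(u_0,v_r)=2$, as claimed.
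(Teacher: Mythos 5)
Your proposal is correct and reaches the same two geodesics (spoke at $u_0$ followed by an inner run when $r$ is even, one outer edge plus the spoke at $u_1$ followed by an inner run when $r$ is odd, with the reversed counterpart tying exactly when $r=k$), but it gets there by a genuinely different route. The paper's proof simply splits on the parity of $r$, uses the fact that the inner vertices form two $k$-cycles to exhibit the unique geodesic in each case, and asserts that the extreme case $r=k$ contributes one more geodesic in the opposite direction; uniqueness for $r<k$ is not argued beyond this identification. You instead parametrize an arbitrary walk by its counts $(a,b,c)$ of outer edges, spokes and inner edges, observe that $b$ must be odd, translate the endpoint condition into the displacement equation $a+2c\equiv \pm r \pmod{2k}$ with the parity constraint $a\equiv r\pmod 2$, and obtain both the distance and the geodesic count by minimizing $a+b+c$; this is more systematic and, in particular, it genuinely rules out competitors with $b\geq 3$, which the paper leaves implicit. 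The one loose step is your claim that non-backtracking between consecutive spokes already forces a single global orientation, so that the net displacement equals $a+2c$: a priori a walk could run forward on one segment and backward on another. This is easily repaired inside your own framework: if the orientations are mixed, cancellation gives $|D|<a+2c$ while still $D\equiv r\pmod{2k}$ and $D\equiv a\pmod 2$, so $a+2c\geq r+2$ (or $\geq 2k-r+2$), and the resulting length strictly exceeds your $b=1$ minimizer; hence mixed-orientation walks are never geodesics and your count of one geodesic for $r<k$ and two for $r=k$ stands. In short, the paper's argument is shorter but exhibits rather than excludes, while yours buys a complete uniqueness proof at the cost of the small orientation lemma just indicated.
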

\begin{proof}
When $r$ is even and $r<k$, both $v_i$ and $v_{i+r}$ are either even or odd and hence there is a unique geodesic joining the spoke $(u_i,v_i)$ to $v_{i+r}$.  In the extreme case there is one more geodesic along the opposite side of the same inner cycle.\\

 When $r$ is odd and $r<k$, $u_i$ and $v_{i+r}$ lie on different inner cycles. So there is a unique geodesic passing through $\{u_i,u_{i+1},v_{i+1}\}$. In the extreme case there is one more geodesic lying in the opposite direction. 
\end{proof}
It can be seen that in $GP(n,2)$ when $n$ is even, the outer and inner cycles are even and hence the number of geodesics in each of the extreme cases doubles.

 \section{Distance between two vertices in  $GP(n,2)$}

\begin{theorem}

The distance between a pair of vertices $(u_i,u_j)$,  $(v_i,v_j)$ and $(u_i,v_j)$ in  $GP(n,2)$ is given by  

\begin{equation*}
d(u_i,u_j)=\begin{cases}
   
   r&\hbox{ for } r\leq 5 \\
\frac{r+4}{2} &\hbox{ for even } r \hbox{, } r>5\\
\frac{r+5}{2}&\hbox{ for odd } r \hbox{, }  r>5\\
\end{cases}
\end{equation*}
\begin{equation*}
d(v_i,v_j)=\begin{cases}    
  \frac{r}{2}&\hbox{ for even } r \\
\frac{r+5}{2}&\hbox{ for odd } r
\end{cases}
\end{equation*}
\begin{equation*}
d(u_i,v_j)=\begin{cases}     
   \frac{r+2}{2}&\hbox{ for even } r  \\
\frac{r+3}{2}&\hbox{ for odd } r \\
\end{cases}
\end{equation*}
 where $|i-j|=r\leq \ceil*{\frac{n-1}{2}}$
\end{theorem}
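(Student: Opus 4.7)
The plan is to treat this theorem as a direct corollary of the geodesic-counting theorems of Section~3: the proofs of those theorems already exhibited explicit shortest paths, and the distance formulas claimed here are simply the lengths of those paths. I would organize the argument by the three cases of vertex pairs and, within each, by the parity of $r$, noting that the claimed formulas are insensitive to the parity of $n$, so the same lengths must hold uniformly for $n$ odd and $n$ even.

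For $d(u_i,u_{i+r})$, the outer cycle always supplies a walk of length $r$. Any competing path that enters an inner cycle uses at least two spokes; with $j$ inner edges and $s$ outer edges a single-excursion path covers $r=2j+s$ positions with total length $s+2+j=r-j+2$, so its length is minimized by maximizing $j$. For even $r$ this yields $(r+4)/2$ (using $j=r/2$), and for odd $r$ it yields $(r+5)/2$ (using $j=(r-1)/2$, $s=1$). Comparing with $r$ gives the piecewise formula: outer wins for $r\leq 5$, the inner-crossing path wins for $r>5$. For $d(v_i,v_{i+r})$, even $r$ puts both endpoints into the same parity class (same inner cycle when $n$ is even, same even/odd half of the single inner cycle when $n$ is odd), forcing an inner-only walk of length $r/2$; odd $r$ forces one wicket $\{v_{i+2t},u_{i+2t},u_{i+2t+1},v_{i+2t+1}\}$ of weight $3$ together with $(r-1)/2$ inner edges, totalling $(r+5)/2$. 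For $d(u_i,v_{i+r})$, even $r$ uses the spoke at $u_i$ plus $r/2$ inner edges, giving $(r+2)/2$; odd $r$ uses one outer edge, one spoke, and $(r-1)/2$ inner edges, giving $(r+3)/2$.

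The main obstacle is to rule out paths that make several excursions between the outer cycle and the inner cycle(s). The governing fact is that each maximal such excursion contributes exactly two spokes to the length, so a path with $k$ excursions incurs $2k$ spokes alone; comparing a $k$-excursion path with the canonical single-excursion (or zero-excursion) path above shows that the extra $2(k-1)$ spoke-cost cannot be recouped, because the inner cycles in $GP(n,2)$ are long enough that all inner edges of a shortest path can always be consolidated into one excursion. The side condition $r\leq\lceil(n-1)/2\rceil$ ensures that the forward canonical path is no longer than its reversed counterpart, so no additional reduction appears at the extremes and the three piecewise formulas are exactly the lengths of the canonical geodesics described above.
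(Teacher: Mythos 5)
Your overall route coincides with the paper's: exhibit the canonical shortest paths (the outer path, the spoke--inner--spoke path, the wicket path joining inner vertices of opposite parity, and the spoke-plus-inner path for mixed pairs) and read the distances off their lengths. The one genuine addition is your excursion-counting lower bound (a monotone single-excursion path with $j$ inner edges and $s$ outer edges covers $r=2j+s$ at cost $r-j+2$, and extra excursions cost two spokes each that cannot be recouped); the paper's proof only exhibits the paths and leaves minimality implicit, so this part of your write-up is a strengthening rather than a divergence.

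There is, however, a step that fails: the claim that the side condition $r\le\lceil (n-1)/2\rceil$ guarantees that ``no additional reduction appears at the extremes.'' For inner--inner pairs with $n$ odd this is false, because the inner vertices form a \emph{single} $n$-cycle in which the even block and the odd block meet at the wrap-around; hence a pair $v_i,v_j$ with odd $r=|i-j|$ is also joined by a spoke-free path $v_i,v_{i-2},v_{i-4},\ldots,v_j$ of length $\frac{n-1}{2}-\frac{r-1}{2}$, which beats the wicket path of length $\frac{r+5}{2}$ as soon as $r>\frac{n-1}{2}-2$. Concretely, in $GP(13,2)$ one has $d(v_0,v_5)=4$ via $v_0v_{11}v_9v_7v_5$, whereas the displayed formula (and your argument) give $5$; note $r=5\le\lceil 12/2\rceil$, so the case is inside the theorem's stated range. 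This is not purely your problem --- the paper's own earlier theorem on $\sigma(v_i,v_j)$ for odd $n$ records that for odd $r>k-2$ the unique geodesic is exactly this reverse inner path, so the distance formula itself needs an extra case or the restriction to odd $r\le\frac{n-1}{2}-2$ --- but your proof, like the paper's, silently asserts that the wicket path is a geodesic in this range, and neither your consolidation argument nor your ``reversed counterpart'' comparison ever considers the spoke-free reverse route inside the single inner cycle. To repair your proof you would need to compare $\frac{r+5}{2}$ with $\frac{n-1}{2}-\frac{r-1}{2}$ explicitly and split the odd-$r$, odd-$n$ case accordingly.
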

\begin{proof}
For $r\leq \ceil*{\frac{n-1}{2}}$, from any vertex $u_i$ to  $u_{i+r}$ (or $u_{i-r}$) for $r\leq 5$ there is a geodesic of length $r$ lying on the outer cycle.  When $r$ is even,  $r>5$ there is a geodesic of length $\frac{r}{2}+2$ joining $u_i$ and $u_{i+r}$ passing through $\{v_i,v_{i+2},\ldots,v_{i+r}\}$ and the two spokes at $u_i$ and $u_{i+r}$.\\
When $r$ is odd, there is a geodesic of length $\frac{r-1}{2}+3$ joining $u_i$ and $u_{i+r}$ passing through $\{u_{i+1},v_{i+1},v_{i+3},\ldots,v_{i+r-1},u_{i+r-1}\}$.\\

 If both $v_i$ and $v_{i+r}$ are either odd or even there is a unique geodesic of length $r/2$ joining them along the inner cycle, if not so, there is a geodesic $\{v_i,u_i,u_{i+1},v_{i+1},v_{i+3},\ldots,v_{i+r}\}$ of length $\frac{r-1}{2}+3$. \\
 
   If $r$ is even, $u_i$ and $v_i$ both are either odd or even and there is a geodesic of length $\frac{r}{2}+1$, otherwise there is a geodesic of length $\frac{r-1}{2}+2$ including a spoke and an outer edge.
\end{proof}
\begin{corollary}
The diameter of $GP(n,2)$ when $n\geq 8,$ is given by\\
$$diam\, GP(n,2)=\ceil*{\frac{n-1}{4}}+2$$
\end{corollary}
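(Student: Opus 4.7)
The plan is to read off the diameter as the maximum value of the three distance functions supplied by the preceding theorem, then maximize each over the admissible range $1\le r\le \lceil (n-1)/2\rceil$, and finally collapse the resulting expressions with the nested ceiling identity $\lceil\lceil x/a\rceil/b\rceil=\lceil x/(ab)\rceil$.

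First I would argue that the maximum is attained among outer-outer pairs. Comparing the three formulas, for odd $r>5$ the values $d(u_i,u_j)$ and $d(v_i,v_j)$ agree at $(r+5)/2$ while $d(u_i,v_j)=(r+3)/2$ is strictly smaller; for even $r>5$ we have $d(u_i,u_j)=(r+4)/2>(r+2)/2=d(u_i,v_j)>r/2=d(v_i,v_j)$. Thus the diameter equals $\max_{1\le r\le R}d(u_i,u_{i+r})$, where $R:=\lceil(n-1)/2\rceil$; the hypothesis $n\ge 8$ guarantees $R\ge 4$ so that the ``$r>5$'' branch of the formula eventually governs.

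Next I would maximize $d(u_i,u_{i+r})$ in $r$. For $r>5$, even $r\mapsto(r+4)/2$ and odd $r\mapsto(r+5)/2$ are both non-decreasing in $r$, and one checks that moving from an odd $r$ to $r+1$ leaves the distance unchanged, while moving from an even $r$ to $r+1$ increases it by $1$. Hence the maximum over $r\le R$ equals $\lceil R/2\rceil+2$: if $R$ is odd, the odd branch yields $(R+5)/2=(R+1)/2+2=\lceil R/2\rceil+2$; if $R$ is even, the even branch yields $(R+4)/2=R/2+2=\lceil R/2\rceil+2$. (The small cases $r\le 5$ do not dominate because $R\ge 4$ and $r\le 5$ gives $d=r\le 5\le \lceil R/2\rceil+2$.)

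Finally, applying the identity $\big\lceil\tfrac{1}{2}\lceil\tfrac{n-1}{2}\rceil\big\rceil=\lceil\tfrac{n-1}{4}\rceil$ gives $\operatorname{diam}GP(n,2)=\lceil(n-1)/4\rceil+2$. The one step that requires a little care is checking that some outer-outer pair realizes $r=R$; this follows from the definition of $R=\lceil(n-1)/2\rceil$ as the eccentricity of $u_0$ on the outer cycle (for odd $n=2k+1$ take $r=k$, for even $n=2k$ take $r=k$), which is the most delicate point but is immediate from the symmetry discussion and the diametric-pair description given at the start of Section~2. The only real obstacle is keeping the four residues $n\bmod 4$ straight while verifying the ceiling identity, but no single case is hard.
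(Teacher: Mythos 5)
Your derivation is correct and follows what is evidently the paper's own (implicit) route: the corollary is read off by maximizing the three distance formulas of the preceding theorem over $1\le r\le\lceil\frac{n-1}{2}\rceil$, with the outer--outer pairs attaining the maximum, and then collapsing $\lceil\frac{1}{2}\lceil\frac{n-1}{2}\rceil\rceil=\lceil\frac{n-1}{4}\rceil$. The only cosmetic blemishes are at the boundary: for $n=8,9$ (where $R=\lceil\frac{n-1}{2}\rceil=4$ and $\lceil R/2\rceil+2=4$) your parenthetical chain ``$d=r\le 5\le\lceil R/2\rceil+2$'' should instead use $d=r\le R=\lceil R/2\rceil+2$ with the maximum attained at $r=R\le 5$, and your domination argument, stated only for $r>5$, should add the one-line remark that the inner--inner values $\frac{r+5}{2}$ for small odd $r$ likewise never exceed the outer--outer maximum $\lceil R/2\rceil+2$.
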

\section{Betweenness centrality}
Betweenness centrality\cite{freeman1978centrality,borgatti2006graph,hage1995eccentricity,friedkin1991theoretical} measures the relative importance of vertex in a graph.  A vertex is said to be  central  if it can effectively monitor the communication between  vertices. It describes how a vertex acts as a bridge among all the pairs of vertices.  Betweenness centrality of a vertex $x$ is the sum of the fraction of all-pairs shortest paths that pass through $x$. It has wide applications in the analysis of networks\cite{chen2012identifying,everton2009network,memon2008detecting,estrada2006virtual,khansari2016centrality}. \\\\
\textbf{Betweenness centrality of a vertex in a graph}
\begin{definition}\cite{freeman1977set}.
 Let $G$ be a graph and $x\in V(G)$, then the betweenness centrality of $x$ in $G$ denoted by $B_G(x)$ or simply $B(x)$  may be defined as
$$B_G(x) = \sum_{s,t \in V(G)\setminus\{x\}}{\frac{\sigma_{ st}(x)}{\sigma_{ st}}}$$ where  $\sigma_{ st}(x)$ denotes the number of shortest $s$-$t$ paths in $G$ passing through $x$ and $\sigma_{ st}$, the number of shortest $s$-$t$ paths in  $G$. The ratio $\frac{\sigma_{ st}(x)}{\sigma_{ st}}$ is called \textit{pair dependency} of $\{s,t\}$ on $x$, denoted by $\delta_G(s,t,x)$. 
\end{definition}
We may now define the following terms related to betweenness centrality.\\

\begin{definition}
Let $G$ be a graph and $H$ a subgraph of $G$. Let $x\in V(H)$, then the betweenness centrality of $x$ in $H$ denoted by $B_H(x)$  may be defined as\\ 
$$B_H(x) =\sum_{s,t\in V(H)\setminus\{x\}}{\frac{\sigma^H_{ st}(x)}{\sigma^H_{ st}}}$$ where  $\sigma^H_{ st}(x)$ and $\sigma^H_{ st}$ denotes  the number of shortest $s$-$t$ paths  passing through $x$ and  the number of shortest $s$-$t$ paths respectively, being their vertices in $H$.
\end{definition}
\begin{definition}
Let $G$ be a graph and $H$ a subgraph of $G$. Let $x\in V(G)$, then the betweenness centrality of $x$ induced by $H$ denoted by $B(x,H)$ may be defined as\\ 
$$B(x,H) =  \sum_{s,t(\neq{x})\in V(H)}{\frac{\sigma_{ st}(x)}{\sigma_{ st}}}$$
\end{definition}
\begin{definition}
Let $G$ be a graph and $S$ a subset of $V(G)$. Let $x\in V(G)$, then the betweenness centrality of $x$ induced by $S$ denoted by $B(x,S)$  may be  defined as\\ 
$$B(x,S) = \sum_{s,t(\neq x)\in S}{\frac{\sigma_{ st}(x)}{\sigma_{ st}}}$$
\end{definition}

\begin{definition}
Let $G$ be a graph and $x,x_0 \in V(G)$,  then the betweenness centrality of  $x$ induced by $x_0$ in $G$, denoted by $B_G(x,x_0)$ or simply $B(x,x_0)$ is defined by
$$B_G(x,x_0)=\sum_{t\in V(G)\setminus x}\frac{\sigma_{ x_0t}(x)}{\sigma_{ x_0t}}$$
\end{definition} 
It can be easily seen that in any graph $G$, the betweenness centrality induced by a vertex on its extreme vertex or an end vertex is zero. $B(x_i,x_j) = 0$ for complete graph $K_n$. 
 Let $P_n$ be a path on $n$ vertices $\{x_1,\ldots,x_n\}$, then 
 
\begin{equation*}
 B(x_i,x_j) =  \begin{cases}
       i-1 & \hbox{ if } i<j\\
        n-i & \hbox{ if }j<i
        \end{cases}
\end{equation*}
  
If $C_n$ is a cycle on $n$ vertices $\{x_0,\ldots,x_{n-1}\}$, then \\ 
if $n$ is even,
\begin{equation*}
 B(x_i,x_0) =  \begin{cases}
\frac{n-1-2i}{2} & \hbox{ if } 1\leq i<n/2\\
       0 & \hbox{ if }i=n/2
        \end{cases}
\end{equation*}
if $n$ is odd, 
$$ B(x_i,x_0) = 
\frac{n-1-2i}{2} \;\; \hbox{  if  } 1\leq i\leq \frac{n-1}{2}$$
By symmetry,
$$B(x_i,x_0)=B(x_{n-1},x_0)$$\\

For a star  $S_n$ with central vertex  $x_0$, 

 $$B(x_i,x_0) = 0\;B(x_0,x_i) = n-2$$ $$B(x_i,x_j) = 0\hbox{ for } i,j\neq 0$$\\
 
For a wheel  $W_n, n>5$ with central vertex  $x_0$, 

 $$B(x_i,x_0) = 0,\;B(x_0,x_i) =n-5 $$ $$B(x_i,x_{i\pm 1} ) = 1/2,\; B(x_i,x_{i\pm j} ) = 0\hbox{ for }j\geq 2$$
\begin{theorem}
Let $G$  be a graph and $x_i\in V(G)$, then\\$$B_G(x_i)=\frac{1}{2}\sum_{j\neq i}B_G(x_i,x_j)$$
\end{theorem}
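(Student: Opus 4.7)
The plan is to start from the right-hand side and unfold the definition of $B_G(x_i, x_j)$, then swap the order of summation and recognize the resulting double sum as $2 B_G(x_i)$, where the factor of $2$ arises because each unordered pair $\{s,t\}$ of distinct vertices in $V(G)\setminus\{x_i\}$ is being counted as both $(s,t)$ and $(t,s)$.

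First I would substitute
\[
B_G(x_i,x_j)=\sum_{t\in V(G)\setminus\{x_i\}}\frac{\sigma_{x_jt}(x_i)}{\sigma_{x_jt}}
\]
into $\sum_{j\neq i}B_G(x_i,x_j)$, renaming $x_j$ as $s$, which turns the expression into the double sum $\sum_{s\neq x_i}\sum_{t\neq x_i}\sigma_{st}(x_i)/\sigma_{st}$ over ordered pairs $(s,t)$ with both coordinates different from $x_i$. The diagonal terms $s=t$ contribute $0$ since a trivial one-vertex walk from $s$ to $s$ cannot pass through $x_i\neq s$; the off-diagonal ordered pairs then partition into unordered pairs $\{s,t\}$ with $s\neq t$, each appearing exactly twice.

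The essential symmetry to invoke is that in an undirected graph the set of shortest $s$--$t$ paths equals the set of shortest $t$--$s$ paths (read in reverse), so $\sigma_{st}=\sigma_{ts}$ and $\sigma_{st}(x_i)=\sigma_{ts}(x_i)$, hence the two occurrences of each unordered pair contribute the same ratio. Collecting the pair $\{s,t\}$ and dividing by $2$ then reproduces exactly
\[
B_G(x_i)=\sum_{s,t\in V(G)\setminus\{x_i\}}\frac{\sigma_{st}(x_i)}{\sigma_{st}},
\]
where the indexing convention in the definition is understood as ranging over unordered pairs of distinct vertices (which is forced by the factor $\tfrac{1}{2}$ in the statement).

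There is really no deep obstacle here; the proof is a bookkeeping exercise about converting between a sum indexed by unordered pairs and a sum indexed by ordered pairs. The only point that needs a brief justification is the symmetry $\sigma_{st}(x_i)=\sigma_{ts}(x_i)$, and the reminder that the $s=t$ contribution to the expanded double sum is zero so that interchanging ordered pairs with unordered pairs plus a factor of $2$ is valid.
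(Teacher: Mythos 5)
Your proposal is correct: unfolding the definition of $B_G(x_i,x_j)$, discarding the vanishing diagonal terms, and using $\sigma_{st}=\sigma_{ts}$, $\sigma_{st}(x_i)=\sigma_{ts}(x_i)$ to pair each unordered pair $\{s,t\}$ with its two ordered occurrences is exactly the double-counting argument this identity rests on, and it correctly pins down that the sum in the definition of $B_G(x_i)$ ranges over unordered pairs. The paper states this theorem without providing any proof, so your bookkeeping argument supplies precisely what is missing and needs no further comparison.
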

\begin{definition}
Let $G$ be a graph and $x\in V(G)$. Let $S$,   $T$ be two disjoint subsets of $V(G)$, then the betweenness centrality of $x$ induced by $S$ and $T$ denoted by $B(x,S,T)$  may be defined as\\ 
$$B(x,S,T)=B(x,S)+B(x,T)$$
\end{definition} 
\begin{definition}
Let $G$ be a graph and $x\in V(G)$. Let $S$,          $T$ be two disjoint subsets of $V(G)$ where $s(\neq x)\in S$ and $t(\neq x)\in T$, then the betweenness centrality of $x$ induced by $S$ and $T$ one against the other denoted by $B(x,S | T)$  may be defined as\\ 
$$B(x,S|T)=\sum_{s\in S,\,t\in T}\frac{\sigma_{st}(x)}{\sigma_{st}}$$ 
\end{definition} 
\section{Betweenness centrality of a vertex in GP(n,2)}
Let us consider the betweenness centrality of $GP(n,2)$, where the vertices lie on two vertex transitive subgraphs namely the outer cycle generated by  $U=\{u_0,u_1,\ldots,u_{n-1}\}$ and the inner cycle generated by $V=\{v_0,v_1,\ldots v_{n-1}\}$.
\subsection{Betweenness centrality of an outer vertex in GP(n,2)}
\begin{theorem}
The betweenness centrality of an outer vertex $u$ in $GP(n,2)$ is given by\\
$$B(u)~= 
\begin{dcases}\frac{1}{4}(5n+1) & \hbox{ for }  n=13,17,21,\ldots\\ 
        \frac{15n^2+32n-79}{12(n+1)} &\hbox{ for  }n=15,19,23,\ldots\\
\frac{1}{4}(5n+14) &\hbox{ for even }n,\,  n\geq 12
\end{dcases}$$
\end{theorem}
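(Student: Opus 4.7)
The plan is to exploit the vertex-transitivity of the outer cycle to fix $u = u_0$, and to compute $B(u_0)$ through the identity $B(u_0) = \frac{1}{2}\sum_{x \neq u_0} B(u_0,x)$ established earlier. Expanding, I need to evaluate the pair dependency $\delta(s,t,u_0) = \sigma_{st}(u_0)/\sigma_{st}$ for every unordered pair $\{s,t\} \subset V(GP(n,2)) \setminus \{u_0\}$ and sum. I would split these pairs into three families by endpoint type, outer--outer, outer--inner, and inner--inner, and within each family index the pairs by the cyclic positions $i,j$ relative to $u_0$.

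For each family and each distance $r = |i-j|$, the preceding theorems give explicit descriptions of all $\sigma_{st}$ geodesics. A geodesic through $u_0$ must traverse one of the two outer edges incident to $u_0$ (namely $u_{-1}u_0$ or $u_0u_1$), or must use the spoke $u_0v_0$ together with an adjacent outer edge. Using this, for each pair $\{s,t\}$ I would identify how many of the listed geodesics actually cross $u_0$. In the generic ranges (distance well below the diameter) this number is $0$ or a simple fraction such as $1/\sigma_{st}$, giving contributions that can be summed in closed form as arithmetic sums in $r$, together with the constants coming from the small-$r$ cases $r \leq 5$ treated individually in the earlier lemmas.

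The three-case split in the stated formula would then emerge as follows. The parity of $n$ decides whether the inner graph is a single $n$-cycle or two $(n/2)$-cycles, and also whether an outer vertex has a unique antipode or a pair; this separates the even-$n$ case from the odd ones. Within $n = 2k+1$ odd, the parity of $k$ (equivalently $n \bmod 4$) controls the parity of $r = k$ and of $r = k-2$. For $k$ odd the extreme inner--inner case $r = k-2$ produces $(r+3)/2 = (k+1)/2$ geodesics, and pair dependencies of the form $c/\sigma$ with $\sigma = (k+1)/2$ introduce the factor $n+1$ in the denominator of the $n \equiv 3 \pmod 4$ formula; for $k$ even these terms collapse to simple integer fractions and the result reduces to a linear polynomial in $n$, matching the remaining two cases.

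The main obstacle is the bookkeeping near the diameter, where the earlier theorems give several extreme geodesic counts (the $r = k$ lines in every table and the $r = k-2$ line for odd $r$ in the inner--inner count). In each such case one must list the actual geodesics, decide which pass through $u_0$, avoid double-counting geodesics counted from both endpoints, and guard against the degenerate possibility that $u_0$ is an endpoint rather than an internal vertex of the geodesic (which makes $\sigma_{st}(u_0)$ zero by convention). I would verify the final expression by direct computation for one representative of each residue class, e.g.\ $n = 12$, $n = 13$ and $n = 15$, before committing to the general summation.
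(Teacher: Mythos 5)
Your plan is essentially the paper's own proof: the paper likewise fixes an outer vertex and splits all pairs into outer--outer, inner--inner, and outer--inner families, computing $B(u_0,U)$, $B(u_0,V)$ and $B(u_0,U|V)$ in separate lemmas by summing pair dependencies over distance with the near-diameter extremes treated individually, and the case split by the parity of $n$ and of $k$ (including the $(n+1)$ denominator coming from the extreme inner--inner pairs with $\sigma=(k+1)/2$) arises exactly as you describe. Carrying out your summations would simply reproduce those three lemmas, so the approach is correct and not materially different from the paper's.
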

\begin{proof}
The betweenness centrality of a vertex in $G$  is the sum  of the  betweenness centralities induced by  $U$, $V$ and $U$ $Vs$ $V$ determined in lemma \ref{lemb2} -\ref{lemb4}.
\end{proof}
\begin{lemma}\label{lemb1}
For any vertex $u_0$ in the outer cycle of $GP(n,2)$, $n\geq 6$,   there are 10 pairs of outer vertices, and for each pair there is a geodesic lying on the outer cycle  with $u_0$ as an internal vertex. More over, these pairs contribute the value 6.5 for its betweenness centrality.
\end{lemma}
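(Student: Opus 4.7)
The plan is to enumerate, grouped by the outer-cycle distance $r = |s-t|$, all pairs $(u_s, u_t)$ of outer vertices such that some geodesic lying entirely in the outer cycle has $u_0$ as an internal vertex, and then to sum the pair dependencies $\delta(u_s, u_t, u_0) = \sigma_{st}(u_0)/\sigma_{st}$ over these pairs.

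By the earlier proposition, no geodesic between $u_i$ and $u_{i+r}$ stays inside the outer cycle once $r > 5$, so only $r \in \{2,3,4,5\}$ contributes. For each such $r$, the pairs that place $u_0$ strictly between their endpoints on the outer cycle are $(u_{-a}, u_{r-a})$ for $a = 1, \ldots, r-1$, giving $r-1$ pairs and a grand total of $1+2+3+4 = 10$, which proves the counting half of the statement.

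For the dependency half, I would invoke the earlier theorem enumerating all geodesics between two outer vertices. For $r=2$ and $r=3$ the unique geodesic lies on the outer cycle and contains $u_0$, contributing $1$ and $2$ respectively. For $r=4$ each pair has two geodesics, the outer one (through $u_0$) and the inner detour $\{u_i, v_i, v_{i+2}, v_{i+4}, u_{i+4}\}$ (avoiding $u_0$), so each of the three pairs contributes $1/2$, totalling $1.5$. The delicate case is $r=5$: each of the four pairs admits three geodesics, and in addition to the outer-cycle geodesic I would check which of the two ``inner-spoke'' geodesics
\begin{equation*}
\{u_i, u_{i+1}, v_{i+1}, v_{i+3}, v_{i+5}, u_{i+5}\}, \qquad \{u_i, v_i, v_{i+2}, v_{i+4}, u_{i+4}, u_{i+5}\}
\end{equation*}
additionally uses $u_0$. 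A direct substitution shows that for the two extreme pairs $(u_{-1}, u_4)$ and $(u_{-4}, u_1)$ exactly one of these detours passes through $u_0$ (via the spoke $u_0 v_0$), giving dependency $2/3$; while for the two ``middle'' pairs $(u_{-2}, u_3)$ and $(u_{-3}, u_2)$ neither detour touches $u_0$, giving dependency $1/3$. Summing yields $2\cdot\tfrac{2}{3} + 2\cdot\tfrac{1}{3} = 2$ at $r=5$, hence the grand total $1 + 2 + 1.5 + 2 = 6.5$.

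The only genuine obstacle is the bookkeeping at $r=5$: one must notice that whether the spoke $u_0 v_0$ lies on a shortcut geodesic depends on whether $u_0$ sits adjacent to one of the endpoints $u_s$, $u_t$, and it is this endpoint-versus-middle asymmetry that prevents the total from collapsing to a cleaner value. Everything else is a routine application of the previously established geodesic-counting theorems on the outer cycle.
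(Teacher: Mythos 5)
Your proposal is correct and follows essentially the same route as the paper: both enumerate the ten pairs at outer-cycle distance $r\in\{2,3,4,5\}$ straddling $u_0$ and sum the pair dependencies $1,\,1,\,\tfrac12,\,\tfrac23,\,\tfrac13$ obtained from the earlier geodesic counts, the paper listing six pairs and doubling the asymmetric ones by symmetry where you group by $r$. Your explicit verification of the $2/3$ versus $1/3$ split at $r=5$ (which the paper merely asserts) is a welcome extra detail, and both arguments share the same implicit assumption that $n$ is large enough that $r=5<k$, so the distance-$5$ pairs have exactly three geodesics.
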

\begin{proof}
Consider $GP(n,2)$, $n\geq 6$. Now for any vertex $u_0\in U$, followed by lemma \ref{l1}, it can be easily seen that there  exists a geodesic joining $u_{-1}$ to $u_r$ and $u_{-2}$ to $u_s$ where $r=1,2,3,4$ and $s=2,3$ lying entirely on the outer cycle,  contributing  1, 1, 1/2, 2/3 and 1/2, 1/3 respectively to the betweenness centrality of $u_0$. Hence by the symmetry of metric, there are 10 pairs of vertices with total contribution  13/2.
\end{proof}
\begin{lemma}\label{lemb2}
In $GP(n,2)$, the betweenness centrality of an outer vertex $u_0$ induced by the outer cycle is given by

$$B(u_0,U)~= 
\begin{dcases}\frac{1}{4} (n+13) & \hbox{ for } n=13,17,21,\ldots\\ 
 \frac{1}{12}(3n+41) &\hbox{ for  }n=15,19,23,\ldots\\
  \frac{1}{4} (n+14) &\hbox{ for  even }n,\,n\geq 12 
\end{dcases}$$
\end{lemma}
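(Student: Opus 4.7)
The plan is to evaluate $B(u_0,U)=\sum_{\{s,t\}\subset U\setminus\{u_0\}}\sigma_{st}(u_0)/\sigma_{st}$ pair by pair. By the symmetry of the outer cycle about $u_0$, every unordered pair in $U\setminus\{u_0\}$ can be written uniquely as $\{u_{-a},u_b\}$ with $a,b\geq 1$, and I set $r:=a+b$, the length of the outer arc joining the two vertices through $u_0$. As a preliminary reduction I discard pairs with $r>n/2$: for these the through-$u_0$ arc is the longer one, so $d(u_{-a},u_b)$ is governed by the shorter arc of length $n-r$, and comparing $d(u_0,u_{-a})+d(u_0,u_b)$ with $d(u_{-a},u_b)$ via the distance theorem of Section 4 gives a strict inequality for every admissible splitting. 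Hence only the range $2\leq r\leq\lfloor n/2\rfloor$ matters.

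Next I invoke Lemma \ref{lemb1} to settle $r\leq 5$, which contributes $13/2$. For $r\geq 6$ I test the identity $d(u_0,u_{-a})+d(u_0,u_b)=d(u_{-a},u_b)$ required for $u_0$ to lie on a geodesic. When $r\geq 6$ is even the distance equals $r/2+2$, and a case check over the splittings $a+b=r$ shows the left side always strictly exceeds $r/2+2$, so even $r$ contributes nothing. When $r\geq 7$ is odd the distance equals $(r+5)/2$, and the same check forces the equality down to the two boundary splittings $(a,b)\in\{(1,r-1),(r-1,1)\}$. For each of these the outer-cycle geodesic theorem of Section 3 produces two geodesics (for $r<k$), of which exactly one, namely the forward mixed path $\{u_{-1},u_0,v_0,v_2,\ldots,v_{r-1},u_{r-1}\}$ or its mirror on the other side of $u_0$, uses the edges $u_{-1}u_0$ and $u_0v_0$ at $u_0$. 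Thus each non-extreme odd $r$ contributes $1/2+1/2=1$.

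The extreme value $r=k$ is then handled case by case. When $n=2k+1$ with $k$ even, $r=k$ is even and contributes zero, giving $B(u_0,U)=13/2+(k-6)/2=(n+13)/4$. When $n=2k+1$ with $k$ odd, the third reverse geodesic promised by the odd-$n$ theorem is easily verified to avoid $u_0$, so the two admissible pairs at $r=k$ together contribute $2/3$, whence $B(u_0,U)=13/2+(k-7)/2+2/3=(3n+41)/12$. For even $n=2k$ the even-$n$ geodesic theorem doubles the geodesic count at $r=k$: when $k$ is odd the two admissible pairs contribute $1/4+1/4=1/2$, and when $k$ is even $r=k$ is itself even and contributes zero; in either sub-case the arithmetic collapses to $13/2+(k-6)/2=(n+14)/4$.

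The main obstacle will be the odd-$r$ identification step. Verifying that only the boundary splittings $(1,r-1)$ and $(r-1,1)$ place $u_0$ on a geodesic requires a careful case analysis against the piecewise distance formula of Section 4, and one must then match, among the explicit geodesics of the Section 3 theorem, the single path that exits $u_{-1}$ through the outer edge toward $u_0$ and leaves $u_0$ along the spoke. The Case B elimination for $r>n/2$ and the extreme-case bookkeeping at $r=k$, where the newly emerging reverse geodesic must be shown not to hit $u_0$, are the other places requiring care.
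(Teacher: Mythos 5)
Your proposal is correct and follows essentially the same route as the paper: Lemma \ref{lemb1} supplies the $13/2$ from separations at most $5$, and for larger separations only the pairs $\{u_{-1},u_{r-1}\}$ (and mirrors) with odd separation contribute $1/2$ each, with $1/3$ (odd $n$) or $1/4$ (even $n$) at the extreme separation $r=k$, summed by symmetry. Your use of the triangle-equality test against the Section 4 distance formula merely makes explicit the paper's implicit claim that no other outer pairs contribute, and your arithmetic in all four parity cases matches the paper's.
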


\begin{proof}
Consider an outer vertex $u_0\in U$ in $GP(n,2)$ for $n\geq 12$. First we take all possible $U$-$U$ pairs of outer vertices and find their contributions to $B(u_0)$. By lemma \ref{lemb1} the outer cycle contains 10 geodesics passing through $u_0$ contributing 13/2. When $n=2k+1$, for even $k$, $k\geq 8$, the pair $(u_{-1},u_r)$, $r=6,8,\ldots,k-2$ contributes 1/2 for each $r$ and when $k$ is incremented, there is one more pair $(u_{-1},u_{k-1})$ with contribution 1/3. Therefore, by symmetry, the outer pairs contribute the sum $\frac{1}{2} (k+7)$ when $k$ is even and $\frac{1}{6}(3k+22)$ when $k$ is odd.  When $n=2k$, for  even $k$, $k\geq 8$, the pair $(u_{-1},u_r)$, $r=6,8,\ldots,k-2$ contributes 1/2 for each $r$ and when $k$ is incremented, there is one more pair $(u_{-1},u_{k-1})$ with contribution 1/4. Hence by symmetry, it can be seen that outer pairs contribute  $\frac{1}{2} (k+7)$ for $k$, even or odd. 
\end{proof}
\begin{lemma}\label{lemb3}
In $GP(n,2)$, the betweenness centrality of an outer vertex $u_0$ induced by the inner cycle is given by
$$B(u_0,V)~= 
\begin{dcases} \frac{1}{2}(n-5) & \hbox{ for } n=13,17,21,\ldots\\ 
 \frac{n^2-2n-19}{2(n+1)} &\hbox{ for  }n=15,19,23,\ldots\\
   n/2 &\hbox{ for  even }n,\,n\geq 12 
\end{dcases}$$
\end{lemma}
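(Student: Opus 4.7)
My plan is to compute $B(u_0,V)$ by organizing the inner--inner vertex pairs $\{v_s,v_t\}$ by their index gap $r=|s-t|\le\ceil*{\frac{n-1}{2}}$ and collecting their pair--dependencies $\sigma_{v_sv_t}(u_0)/\sigma(v_s,v_t)$. The key structural observation is that any geodesic between two inner vertices that contains $u_0$ must traverse one of the two wickets incident to $u_0$, namely $\{v_0,u_0,u_1,v_1\}$ or $\{v_{-1},u_{-1},u_0,v_0\}$: indeed, by the geodesic theorems of Section~3, every even--$r$ geodesic (and the reverse--direction geodesic whenever $r>k-2$ in the odd--$n$ regime) stays on the inner cycle(s) and so cannot meet $u_0$, while every wicket--based geodesic uses exactly one wicket and so passes through $u_0$ iff that wicket is one of the two listed.

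For a ``generic'' odd $r$---meaning $r<k-2$ when $n=2k+1$ and $r<k$ when $n=2k$---the results of Section~3 give $\sigma(v_s,v_{s+r})=(r+1)/2$, with one geodesic for each even $j\in\{0,2,\dots,r-1\}$ identifying the wicket $\{v_{s+j},v_{s+j+1}\}$. A parity--constrained count shows that the pairs admitting a $u_0$--wicket geodesic split into those with $s$ even and wicket $\{v_0,v_1\}$ (requiring $1-r\le s\le 0$) and those with $s$ odd and wicket $\{v_{-1},v_0\}$ (requiring $-r\le s\le -1$): each class has $(r+1)/2$ pairs, so $r+1$ pairs in total contribute, each with a single $u_0$--carrying geodesic. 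The per--$r$ contribution therefore collapses to $(r+1)\cdot\frac{2}{r+1}=2$.

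Boundary odd $r$ require separate treatment. For $n=2k+1$ with $k$ odd and $r=k-2$, a reverse--direction inner--cycle geodesic becomes available (pushing $\sigma$ to $(r+3)/2$) but stays on the inner cycle, so the same $r+1$ pairs contribute with new weight $\frac{2}{r+3}$, summing to $\frac{2(k-1)}{k+1}$; for odd $r>k-2$ the unique geodesic sits inside the inner cycle and gives $0$. For $n=2k$ with $k$ odd and $r=k$, the backward wickets also become geodesics, so $\sigma=r+1$, and I must check case by case which $u_0$--wickets actually achieve the distance $(k+5)/2$ for each pair. The pair $\{v_0,v_k\}$ is exceptional: since $v_0$ is an endpoint of both $u_0$--wickets, both realise geodesics; for each of the remaining $k-1$ pairs only one of the two does, as one of the two required wicket legs is forced to traverse more than half its inner cycle. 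Summing gives $\frac{2+(k-1)}{k+1}=1$ from this $r$, while even $r$ contribute nothing.

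Putting the pieces together: for $n=2k+1$ with $k$ even, the contributing odd $r$'s are $1,3,\ldots,k-3$ (totalling $(k-2)/2$), giving $B(u_0,V)=k-2=(n-5)/2$; for $k$ odd we add the boundary term to obtain $(k-3)+\frac{2(k-1)}{k+1}=\frac{k^2-5}{k+1}$, which becomes $\frac{n^2-2n-19}{2(n+1)}$ after substituting $k=(n-1)/2$; and for $n=2k$, whether $k$ is even or odd, the subcritical blocks of weight $2$ combine with either a zero (if $k$ even) or the boundary value $1$ (if $k$ odd) to give $n/2$. The principal obstacle will be the last boundary case: verifying that the number of $u_0$--carrying geodesics per pair is non--uniform in a way that still sums cleanly, which reduces to isolating $\{v_0,v_k\}$ as the unique doubly--contributing pair and confirming that every other pair contributes exactly once.
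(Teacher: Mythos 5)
Your proposal is correct and follows essentially the same route as the paper: it classifies the $V$--$V$ pairs by their index gap $r$, shows each generic odd gap contributes $r+1$ pairs of pair-dependency $\frac{2}{r+1}$ (total $2$ per gap), and then corrects the extreme gaps ($r=k-2$ for odd $n$ with $k$ odd, and $r=k$ for even $n$ with $k$ odd), recovering $\frac{n-5}{2}$, $\frac{n^2-2n-19}{2(n+1)}$ and $\frac{n}{2}$ exactly as in the paper's proof. Your explicit wicket-at-$u_0$ bookkeeping (including isolating $\{v_0,v_k\}$ as the doubly contributing pair in the even-$n$ boundary case) is just a more detailed rendering of the paper's symmetry count.
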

\begin{proof}
 Consider the possible $V$-$V$ pairs of inner vertices. When $n=2k+1$ for even  $k$, $k\geq 6$, the pair $(v_0,v_r)$  for $r=1,3,\ldots,k-3$ contributes the betweenness centrality $1/t$ where $t=(r+1)/2$. By the symmetry of the metric, there are 2$t$ similar pairs  with a total contribution 2 for each $r$. Since there are $(k-2)/2$ of such $(v_0,v_r)$ pairs, the total contribution of $V$-$V$ pairs is $k-2$. When $k$ is incremented, there is one new pair $(v_0,v_r)$ where $r={k-2}         $  with contribution $1/t$ where $t=(k+1)/2$ and there are $k-1$ similar pairs. Therefore, by symmetry, inner pairs contribute the sum $\frac{k^2-5}{k+1}$. Consider the case $ n=2k$, for even $k,\; k\geq 6$ the inner pairs $(v_0,v_r)$ for $r=1,3,\ldots,k-1$ contributes $1/t$ where $t=(r+1)/2$. Because of symmetry, each pair $(v_0,v_r)$ belongs to a set of $2t$ similar pairs giving 2 and these sets contributes  a total $k$ to the centrality of $u_0$. When $k$ is incremented, the leading pair $(v_0,v_r)$ for $r=k$ gives $1/t$ where $t=(k+1)/2$ and so get the total contribution as $k$. 
\end{proof}
\begin{lemma}\label{lemb4}
In $GP(n,2)$, the betweenness centrality of an outer vertex $u_0$ induced by the vertices of outer Vs inner cycle is given by
$$B(u_0,U|V)~= 
\begin{dcases} \frac{1}{2}(n-1) & \hbox{ for odd } n,\,n\geq 13\\ 
   n/2 &\hbox{ for  even }n,\,n\geq 12 
\end{dcases}$$
\end{lemma}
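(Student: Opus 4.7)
The plan is to evaluate
$$B(u_0,U|V)=\sum_{u_i\in U\setminus\{u_0\},\,v_j\in V}\frac{\sigma_{u_iv_j}(u_0)}{\sigma_{u_iv_j}}$$
by first locating the non-vanishing terms. From the geodesic description given in the earlier theorems of this section, every $u_i$--$v_j$ geodesic in $GP(n,2)$ has the canonical shape ``(at most one outer edge at $u_i$) + (one spoke) + (a path along the inner cycle)'', with no outer edge occurring internally. Hence $u_0$ can serve as an internal vertex of such a geodesic only when it is the outer neighbour of $u_i$ used by the first outer edge, which forces $i\in\{1,-1\}$; every other term in the double sum vanishes.

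The reflection $u_t\mapsto u_{-t}$, $v_t\mapsto v_{-t}$ is an automorphism of $GP(n,2)$ that fixes $u_0$, so the contributions from $u_1$ and from $u_{-1}$ are equal, and it suffices to evaluate $\Sigma=\sum_{v_j\in V}\sigma_{u_1v_j}(u_0)/\sigma_{u_1v_j}$ and then double. For each $v_j$ the candidate shortest $u_1$--$v_j$ paths fall into three types: (a) $u_1\to v_1\to\cdots\to v_j$ using the spoke at $u_1$, (b) $u_1\to u_2\to v_2\to\cdots\to v_j$ using the forward outer edge, and (c) $u_1\to u_0\to v_0\to\cdots\to v_j$ using the backward outer edge; the vertex $u_0$ lies only on route (c). Using the inner-cycle distance formulas already established, I would show that route (c) achieves the minimum exactly when the short cyclic index-distance $r=\min(|j-1|,\,n-|j-1|)$ is odd and $v_j$ lies on the arc from $u_1$ in the direction of $u_0$; each such $j$ contributes $1$ to $\Sigma$, because the geodesic is then unique by the earlier geodesic-count theorems. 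In the extreme case $r=k$, those same theorems give $\sigma_{u_1v_j}=2$, and exactly one of the two geodesics uses route (c), contributing $1/2$ instead.

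A direct enumeration then shows $\Sigma=k/2$ in each parity combination of $n$ and $k$: when no extreme pair arises, $\Sigma$ consists of $k/2$ unit contributions from the odd values $r=1,3,\ldots,k-1$; when an extreme pair does arise (the case $k$ odd), $\Sigma$ consists of $(k-1)/2$ unit contributions plus a single $1/2$ from $r=k$. Doubling gives $2\Sigma=k$, which equals $(n-1)/2$ for odd $n$ and $n/2$ for even $n$, matching the piecewise formula of the lemma. The principal technical subtlety will be the bookkeeping at $r=k$: one must correctly identify which of the routes (a), (b), (c) tie at the extreme, and hence whether exactly one of the two geodesics uses $u_0$; this depends on the parity of $k$ and on the direction (forward versus backward) of the extreme $v_j$ from $u_1$. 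The remaining distance comparisons and arithmetic are routine once the canonical form of the geodesic is in hand.
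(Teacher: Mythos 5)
Your proposal is correct and follows essentially the same route as the paper: the paper likewise reduces to the pairs with outer endpoint $u_{\pm 1}$, counts unit contributions from $(u_1,v_{-r})$, $r=0,2,4,\ldots$, adds a contribution of $1/2$ from the extreme pair when $k$ is odd, and doubles by symmetry to obtain $k$. Your explicit justification that only $i=\pm 1$ can contribute (via the canonical outer-edge--spoke--inner-path shape of $u$--$v$ geodesics established earlier) is left implicit in the paper, but the enumeration and the handling of the $r=k$ case are the same.
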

\begin{proof}
Consider the possible $U$-$V$ pairs. When $n=2k+1$, for even $k$, $k\geq6$, the pair $(u_1,v_{-r})$ for $r=0,2,4,\dots,k-2$ contributes 1,  When $k$ is incremented, the leading pair $(u_1,v_{-r})$ makes the contribution 1/2 for $r=k-1$. Therefore  by symmetry, in either case the total contribution can be found as $k$. Similar argument is there for $n=2k$. 
\end{proof}
\subsection{Betweenness centrality of an inner vertex in GP(n,2)}
\begin{theorem}
The betweenness centrality of an inner vertex $v$ in $GP(n,2)$  is given by\\
$$B(v)~= 
\begin{dcases}\frac{1}{4}(n^2-n-26) & \hbox{ for } n=13,17,21,\ldots\\ 
 \frac{3n^3-83n+16}{12(n+1)} &\hbox{ for  }n=15,19,23,\ldots\\
\frac{1}{4}(n+5)(n-6) &\hbox{ for even }n,\,  n\geq 12
\end{dcases}$$
\end{theorem}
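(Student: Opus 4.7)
The proposal is to mimic the structure used for the outer-vertex result. By vertex-transitivity along the inner cycle we may fix the target vertex to be $v_0$, and decompose
\[
B(v_0)=B(v_0,V)+B(v_0,U)+B(v_0,U\mid V),
\]
the three pieces corresponding to pair-endpoints both inner, both outer, or one of each. The theorem will follow by establishing each summand as a separate lemma (direct analogues of Lemmas 6.2--6.4) and then adding them in the three residue classes of $n$.

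For $B(v_0,V)$, I would use the inner--inner geodesic enumerations (Theorems 3.2 and 3.5). Same-parity pairs admit a unique inner-cycle geodesic and contribute $1$ exactly when $v_0$ is an interior vertex of the corresponding arc; opposite-parity pairs admit $\tfrac{r+1}{2}$ wicket-based geodesics of shape $\{v_i,u_i,u_{i+1},v_{i+1},v_{i+3},\ldots,v_{i+r}\}$, so the pair-dependency on $v_0$ is $2/(r+1)$ whenever $v_0$ is one of the interior inner waypoints. Summing over admissible shifts gives a linear-in-$n$ count. For $B(v_0,U)$, I would use the outer--outer enumerations (Theorems 3.1 and 3.3): pairs with $r\le 5$ supply only a bounded set of contributions (enumerated by hand as in Lemma 6.1), whereas for $r\ge 6$ the geodesics detour through the inner cycle and $v_0$ lies internally on a growing range of them. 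This piece is responsible for the leading $n^2/4$ growth in the claimed formula. For $B(v_0,U\mid V)$, I would use Theorems 3.3 and 3.6: each mixed pair admits at most two geodesics (a second one appearing only in the diametric case), so its contribution reduces to a direct count of those $(u_i,v_j)$ for which $v_0$ sits on the unique canonical geodesic.

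Two complications need care. First, when $n$ is even both the outer and inner cycles are even, so diametric pairs contribute twice; this doubling propagates into $B(v_0,V)$ and $B(v_0,U)$ and accounts for the shift between the even-$n$ and odd-$n$ closed forms. Second, when $n=2k+1$ with $k$ odd, a diametric inner pair has $\tfrac{k+1}{2}$ geodesics, each contributing pair-dependency $2/(k+1)=4/(n+1)$ on $v_0$; this is exactly the origin of the $(n+1)$ denominator in the middle case $\tfrac{3n^3-83n+16}{12(n+1)}$. The main obstacle will be tracking these extreme-case corrections through all three lemmas consistently and verifying that the final sum collapses to the stated closed form in each of the three residue classes; this is careful bookkeeping rather than a new idea, exactly parallel to the $(n+1)$-denominator appearance in the outer-vertex theorem. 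Once the three lemmas are in place, adding them and simplifying within each residue class yields the claimed formula.
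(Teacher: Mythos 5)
Your decomposition $B(v_0)=B(v_0,U)+B(v_0,V)+B(v_0,U\mid V)$, with each piece established as a separate lemma via the geodesic enumerations (including the even-$n$ doubling and the $4/(n+1)$ pair-dependencies from the extreme odd inner pairs) and then summed within each residue class of $n$, is exactly the paper's proof, which cites its three lemmas for $B(v_0,U)$, $B(v_0,V)$ and $B(v_0,U\mid V)$ and adds them. One small correction to your bookkeeping expectations: the quadratic growth is not carried by the outer--outer piece alone --- the three pieces grow like $n^2/16$, $n^2/16$ and $n^2/8$ respectively, and only their sum gives the stated $n^2/4$ leading term.
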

\begin{proof}
The betweenness centrality of $v\in V$  is the sum  of its betweenness centralities induced by the subsets $U$, $V$ and $U$ $Vs$ $V$ in $G$ determined in lemma \ref{lemb5} -\ref{lemb7}.
\end{proof}
\begin{lemma}\label{lemb5}
In $GP(n,2)$, the betweenness centrality of an inner vertex $v_0$ induced by the outer cycle is given by
$$B(v_0,U)~= 
\begin{dcases}\frac{1}{16}(n^2+6n-127) & \hbox{ for } n=13,17,21,\ldots\\ 
 \frac{1}{48}(3n^2+18n-377) &\hbox{ for  }n=15,19,23,\ldots\\
  \frac{1}{16}(n^2+6n-128) &\hbox{ for  even }n,\,n\geq 14 
\end{dcases}$$
\end{lemma}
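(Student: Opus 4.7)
The plan is to expand $B(v_0,U)=\sum_{\{s,t\}}\sigma_{st}(v_0)/\sigma_{st}$ over unordered pairs of outer vertices and parameterize by the cyclic distance $r=|s-t|$. The geodesic structure between outer vertices has already been settled in the first theorem of Section~3, so it suffices to (i) describe, for each pair $\{u_s,u_t\}$, which of its geodesics passes through $v_0$, (ii) count, for each $r$, the number of such contributing pairs, and (iii) weight each pair by its dependency $\sigma_{st}(v_0)/\sigma_{st}$.

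For the non-extreme distances $r<k$ a routine case analysis suffices. If $r\ge 6$ is even, the unique geodesic uses the $r/2+1$ inner vertices $v_s,v_{s+2},\ldots,v_{s+r}$; hence $v_0$ lies on it exactly when $s\equiv 0\pmod{2}$ with $s\le 0\le s+r$ cyclically, yielding $r/2+1$ contributing pairs each of dependency $1$. If $r\ge 7$ is odd, the two geodesics traverse inner-vertex sets of opposite parities, so $v_0$ sits on exactly one of them for each of $r+1$ contributing pairs, each giving $1/2$. The small values $r=4$ and $r=5$ are handled by hand from Lemma~\ref{l1}, producing contributions $3/2$ and $2$ respectively (the purely outer geodesic never passes through $v_0$).

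The extreme case $r=k$ is the subtle part. For odd $n$ with $k$ even (the branch $n\equiv 1\pmod 4$), the first theorem of Section~3 records $\sigma_{st}=1$ even at $r=k$, so no correction is needed and the first subcase of the formula follows directly from the bulk sum above. For odd $n$ with $k$ odd (the branch $n\equiv 3\pmod 4$), a reverse-direction geodesic appears at $r=k$ that shares inner-vertex parity with one of the two forward geodesics, so $\sigma_{st}=3$; precisely two pairs, namely those whose inner arc is anchored at $v_0$, lie on two of the three geodesics and contribute $2/3$ each, while the remaining contributing pairs contribute $1/3$ each. For even $n$ there are only $n/2$ pairs at the diameter but \emph{two} reverse geodesics appear; I expect the accounting to split again into two ``anchored'' pairs of dependency $1/2$ and the remaining pairs of dependency $1/4$, aggregating cleanly to a single uniform even-$n$ formula.

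Summing the contributions for $r=4,5,\ldots,k$ and simplifying produces the three claimed quadratic expressions in $n$. The hard part is the overlap analysis at $r=k$: identifying exactly which pairs lie simultaneously on more than one geodesic (this happens precisely when $v_0$ coincides with an endpoint of the inner arc, i.e.\ for $s=0$ and $s=-k$), and combining this with the parity-of-$k$ split that produces the $n\equiv 1$ versus $n\equiv 3$ branching for odd $n$. Once these endpoint pairs are isolated and weighted correctly, everything else is a finite enumeration of small $r$ plus a straightforward arithmetic progression sum in $r$.
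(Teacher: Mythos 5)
Your overall scheme coincides with the paper's: enumerate the outer pairs whose geodesics pass through $v_0$, weight each by its pair dependency, and sum, treating the diametric pairs separately; the only organizational difference is that you index by the subscript difference $r$ while the paper indexes by the distance $d$ (so the paper's ``$d=5$'' row merges your $r=5$ and $r=6$ contributions), which is cosmetic. Your bulk counts ($3$ pairs of $1/2$ at $r=4$, $6$ pairs of $1/3$ at $r=5$, $r/2+1$ pairs of weight $1$ for even $r\geq 6$, $r+1$ pairs of weight $1/2$ for odd $r\geq 7$) and both odd-$n$ extreme cases are correct and match the paper: for $n\equiv 1\pmod 4$ no correction is needed at $r=k$, and for $n\equiv 3\pmod 4$ exactly the two pairs anchored at $v_0$ (your $s=0$ and $s=-k$) receive $2/3$ while the remaining $3(n-3)/4$ contributing pairs receive $1/3$.

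The genuine gap is the even-$n$ diametric case, which you only conjecture, and the conjecture fails. When $n=2k$ the reverse geodesics do not create a second anchored pair: since $u_{s+k}=u_{s-k}$, the forward- and backward-anchored pairs coincide, so there is exactly one anchored pair, namely $\{u_0,u_k\}$, not two. Moreover the weights depend on the parity of $k$. For $k$ even, $\sigma_{st}=2$ at $r=k$, only the $k/2$ pairs with even-indexed endpoints contribute at all, and they split as one pair of dependency $1$ plus $k/2-1$ pairs of $1/2$, total $(k+2)/4$. For $k$ odd, $\sigma_{st}=4$, all $k$ pairs contribute, splitting as one pair of $2/4$ plus $k-1$ pairs of $1/4$, total $(k+1)/4$. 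Your proposed split (two pairs of $1/2$, the rest $1/4$) yields $(k+2)/4$ in both cases: it matches the truth for $n\equiv 0\pmod 4$ only through an accidental cancellation, and is $1/4$ too large for $n\equiv 2\pmod 4$, so carried through the bulk sum it would produce $\frac{1}{16}(n^2+6n-124)$ instead of the claimed $\frac{1}{16}(n^2+6n-128)$ on that residue class. The paper avoids this by treating the two parities of $k$ as separate subcases (with diameters $(n+8)/4$ and $(n+10)/4$ and the weight patterns above) and verifying that both reduce to the same closed form; your argument needs that analysis, plus the routine final summations you also left unperformed.
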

\begin{proof}
 Consider all outer pairs $(u_i,u_j)$ such that $v_0$ lies on atleast one geodesic joining them. Let $d=d(u_i,u_j)$, then clearly $4\leq d\leq D$, where $D=diam(G)$. Let $B_d(v_0)$ denotes the total contribution of those pairs at a distance $d$ towards the betweenness centrality $B(v_0)$ of $v_0$. Consider $n=2k+1$ where $k=2l,\,l\geq 3$. Now for $d=4$, there exists 3  pairs of 1/2  and for $d=5$, there exists 6 pairs of 1/3 and 4 pairs of 1.  For $6 \leq d\leq D$, there exist $2d-4$ pairs of 1/2 and $d-1$ pairs of 1 giving the sum $2d-3$. Therefore,
 $$B(v_0,U)=\sum_{d=4}^DB_d=15/2+\sum_{d=6}^D(2d-3)=\frac{1}{16}(n^2+6n-127)$$
In the case $n=2k+1$, when $k$ is incremented from odd to even, the leading diametric pairs $(u_i,u_j)$ i.e, vertices at a distance $D=(n+9)/4$, contains 2 pairs of 2/3 and $3(n-3)/4$ pairs of 1/3 and hence there is a contribution of an extra sum $(3n+7)/12$. Therefore,\\
$$B(v_0,U)=\frac{1}{16}(n^2+2n-135)+(3n+7)/12= \frac{1\label{l1}}{48}(3n^2+18n-377) $$
Consider the case $n=2k$ where $k=2l,\,l\geq 3$, then $D=(n+8)/4$ and when $d=D$, there are $(3n-4)/4$ pairs of 1/2 and a single pair of 1 giving the sum $(3n+4)/8$. Therefore,\\
$$B(v_0,U)=15/2+\sum_{d=6}^{D-1}(2d-3)+(3n+4)/8=\frac{1}{16}(n^2+6n-128)$$ 
Consider the case $n=2k$ where $k=2l+1,\,l\geq 3$, then $D=(n+10)/4$ and when $d=D$, there are $(n-2)/2$ pairs of 1/4 and one diametric pair of 2/4\label{l1} giving the sum $(n+2)/8$. Therefore,\\
$$B(v_0,U)=15/2+\sum_{d=6}^{D-1}(2d-3)+(n+2)/8=\frac{1}{16}(n^2+6n-128)$$ 
\end{proof}
\begin{lemma}\label{lemb6}
In $GP(n,2)$, the betweenness centrality of an inner vertex $v_0$ induced by the inner cycle is given by
$$B(v_0,V)~= 
\begin{dcases}\frac{1}{16}(n^2-6n+21) & \hbox{ for } n=13,17,21,\ldots\\ 
 \frac{n^3-5n^2+3n+137}{16(n+1)} &\hbox{ for  }n=15,19,23,\ldots\\
  \frac{1}{16}(n-2)(n-4) &\hbox{ for  even }n,\,n\geq 12 
\end{dcases}$$
\end{lemma}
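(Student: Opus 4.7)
The plan is to adapt the distance-stratified argument of Lemma \ref{lemb5}, now applied to inner-inner pairs. Fix $v_0$, group pairs $(v_s,v_t)$ by the reduced offset $r=|s-t|\in\{1,\ldots,\lceil(n-1)/2\rceil\}$, and let $B_r(v_0)$ denote the corresponding contribution, so that $B(v_0,V)=\sum_r B_r(v_0)$. The three formulas in the statement will come from the three regimes that the geodesic-count theorem of Section~3 distinguishes: $n$ even, $n=2k+1$ with $k$ even (i.e.\ $n\equiv 1\pmod 4$), and $n=2k+1$ with $k$ odd ($n\equiv 3\pmod 4$).

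For the non-extreme pairs (where the standard $\sigma$ formulas apply), I treat even and odd $r$ separately. For even $r$, the unique inner-cycle geodesic between $v_s$ and $v_t$ has $r/2-1$ internal inner vertices; re-anchoring with $v_0$ fixed gives $B_r(v_0)=r/2-1$. For odd $r$, the $(r+1)/2$ wicket geodesics can be indexed by $m\in\{0,\ldots,(r-1)/2\}$: a direct check shows that $v_{s+2j}$ is internal to the $m$-th geodesic iff $m\ge j$ (Case A, $1\le j\le(r-1)/2$) and $v_{s+2j+1}$ is internal iff $m\le j$ (Case B, $0\le j\le(r-3)/2$), yielding pair dependencies $((r+1)/2-j)/((r+1)/2)$ and $(j+1)/((r+1)/2)$ respectively. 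Summing these across $j$ telescopes to $B_r(v_0)=(r-1)/2$. Summing the non-extreme contributions over $r$ then produces a quadratic expression in $n$.

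The last step is to layer on the extreme-distance corrections. For $n\equiv 1\pmod 4$, the odd extreme $r=k-1$ has $\sigma=1$ with a single reverse inner-cycle geodesic, contributing one unit per admissible shift. For $n\equiv 3\pmod 4$, the pair $r=k-2$ acquires one extra reverse geodesic (changing $\sigma$ from $(r+1)/2$ to $(r+3)/2$, which rescales the forward contributions by $(r+1)/(r+3)$ and adds a small reverse piece), while the pair $r=k$ retains only the reverse geodesic ($\sigma=1$). For even $n$, the two disjoint inner cycles double the count for diametric pairs. Adding these corrections to the non-extreme bulk should reproduce the three stated closed forms.

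I expect the main obstacle to be the bookkeeping of the extreme cases: in each regime one must carefully track which geodesics change, how many new pairs contain $v_0$ as an internal vertex, and how the pair dependencies are rescaled; the two orientations of a pair around $v_0$ require particular care to avoid double-counting. Once organized, the residual algebra is a routine summation of arithmetic series that evaluates to the stated quadratic/cubic polynomials in $n$.
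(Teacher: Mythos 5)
Your route is sound and, as far as I can verify, every bulk term and every correction you list is consistent with the stated closed forms, but it is organized genuinely differently from the paper's proof. The paper fixes $v_0$ and stratifies the contributing pairs by their \emph{distance} $d$, directly counting for each $d$ up to the diameter how many pairs at distance $d$ have $v_0$ internal and with what dependency (e.g.\ $B_d(v_0)=2d-4$ in the $n\equiv 1\pmod 4$ case), and in the even-$n$ case it splits the count into same-inner-cycle pairs and cross-cycle pairs before summing the resulting arithmetic progressions. You instead stratify by the \emph{offset} $r$, compute the total dependency that a single pair at offset $r$ distributes over the inner vertices internal to its geodesics (your telescoping wicket computation giving $(r-1)/2$ for odd $r$ and $r/2-1$ for even $r$ is correct), and then transfer this to a fixed $v_0$ by rotational transitivity of the inner vertices. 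That averaging step is the genuinely different ingredient: it buys a uniform treatment of odd and even $n$ with no same-cycle/cross-cycle split, and it confines all parity discussion to the extreme offsets; the price is that the pair counts in the averaging must be watched — there are $n$ pairs per offset for $r<n/2$, but for even $n$ the diametric offset $r=k$ has only $n/2$ pairs, and the doubling there comes from the inner cycles being even cycles (a second, reverse-direction family of geodesics), so $\sigma$ doubles while each dependency halves; your phrase ``double the count'' needs to be unpacked exactly this way. The extreme corrections you name ($r=k-1$ with a single reverse geodesic for $n\equiv 1\pmod 4$; the rescaling by $(r+1)/(r+3)$ at $r=k-2$ plus the reverse piece and $\sigma=1$ at $r=k$ for $n\equiv 3\pmod 4$) are the right ones, and carrying out the deferred summations does reproduce all three formulas, so what you leave open is only routine arithmetic, not a missing idea.
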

\begin{proof}
 We consider all inner pairs $(v_i,v_j)$ such that $v_0$ lies on their shortest paths. Le\label{l1}t $d=d(v_i,v_j)$ and $D=diam(G)$, then clearly $2\leq d\leq D$. Let $B_d(v_0)$ denotes the total contribution of pairs at a distance $d$ towards the betweenness centrality $B(v_0)$ of $v_0$. Let $n=2k+1$ where $k=2l,\,l\geq 3$, then $D=(n+7)/4$. For $d=2$, there exists a  pair contributing 1  and for $2\leq d\leq D-1$ we have  $B_d(v_0)=2d-4$. Therefore,
 $$B(v_0,V)=\sum_{d=2}^{D-1}B_d=1+\sum_{d=3}^{D-1}(2d-4)=\frac{1}{16}(n^2-6n+21)$$

In the case $n=2k+1$ where $k=2l+1,Gplay\,l\geq 3$, we have $D=(n+9)/4$ and 

$$B(v_0,V)=\sum_{d=2}^{D-1}B_d=1+\sum_{d=3}^{D-2}(2d-4)+(D-4)+4(D-2)^{-1}=\frac{n^3-5n^2+3n+137}{16(n+1)}$$
\\
Consider $n=2k$ where $k=2l,\,l\geq 3$. 
There are two inner $k$-cycles. Since $v_0$ lies on a $k$-cycle, subscripted with even numbers, we need not consider the pair $(v_i,v_j)$ with odd subscripts $i$ and $j$.  The even subscripted pairs $(v_i,v_j)$ give the betweenness centrality $(k-2)^2/8$. Now for $j=2,4,6,$ etc, the pair $(v_{-1},v_j)$ gives $1/2,1/3,\ldots, 1/l$; $(v_{-3},v_j)$ gives $2/3,2/4,\ldots, 2/l$ and finally $(v_{-(k-3)},v_2)$ gives $(l-1)/l$. Considering vertices of these two inner $k$-cycles. $B_d(v_0)=d-3$ for $4\leq d\leq D$ where $D=k/2+2$. Therefore,
$$B(v_0,V)=\sum_{d=4}^DB_d+\frac{(k-2)^2}{8}=\frac{1}{16}(n-2)(n-4)$$
In the case  $n=2k$ where $k=2l+1,\,l\geq 3$, the vertices of the same cycle contribute $(k-1)(k-3)/8$ to $v_0$ and for the vertices of the different cycles $B_d(v_0)=d-3$ for $4\leq d\leq D-1$ and $B_D(v_0)=(k-1)/4$ where $D=(k+5)/2$. Now
$$B(v_0,V)=\sum_{d=4}^DB_d+\frac{(k-1)(k-3)}{8}=\frac{1}{16}(n-2)(n-4)$$
\end{proof}
\begin{lemma}\label{lemb7}
In $GP(n,2)$, the betweenness centrality of an inner vertex $v_0$ induced by the vertices of outer Vs inner cycle is given by
$$B(v_0,U|V)~= 
\begin{dcases}\frac{(n-1)^2}{8}& \hbox{ for } n=13,17,21,\ldots\\ 
 \frac{1}{8}(n^2-2n+5) &\hbox{ for  }n=15,19,23,\ldots\\
  \frac{n(n-2)}{8} &\hbox{ for  even }n,\,n\geq 12 
\end{dcases}$$
\end{lemma}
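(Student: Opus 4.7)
The target is
\[
B(v_0, U|V) \;=\; \sum_{u_i \in U,\; v_j \in V\setminus\{v_0\}} \frac{\sigma_{u_iv_j}(v_0)}{\sigma_{u_iv_j}},
\]
so the plan is to enumerate the mixed pairs $(u_i, v_j)$ whose geodesics pass through $v_0$ and sum the corresponding pair dependencies, mirroring the approach of Lemmas~\ref{lemb5} and \ref{lemb6}. First I would isolate the structural fact, already used implicitly in Sections~3.1.3 and 3.2.3, that every shortest $u_i$-$v_j$ path enters the inner cycle through a single spoke---at $u_i$ when $i\equiv j\pmod 2$, and at $u_{i+1}$ after the outer edge $(u_i,u_{i+1})$ when $i\not\equiv j\pmod 2$---and then travels along the shorter arc of the relevant inner cycle from the entry vertex ($v_i$ or $v_{i+1}$) to $v_j$. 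Consequently $v_0$ lies on such a geodesic exactly when it is an internal vertex of that inner arc, equivalently when the entry vertex and $v_j$ sit on opposite sides of $v_0$ in the inner cycle with $v_0$ on the shorter arc.

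With this criterion in hand, the generic (non-diametric) contribution can be obtained by a two-index enumeration: let $t$ be the inner-cycle distance from the entry vertex to $v_0$ and $s$ the inner-cycle distance from $v_0$ to $v_j$. Since $\sigma_{u_iv_j}=1$ away from the extreme case, each such pair contributes $1$, and the total is a short arithmetic sum in $t$ and $s$ bounded by the inner eccentricity of $v_0$. Splitting the sum according to the two parities (that is, according to whether the entry is $u_i$ or $u_{i+1}$) and summing gives the bulk term of order $n^2/8$, which accounts for the leading piece of each of the three claimed formulas.

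The diametric pairs $r=|i-j|=k$ demand separate accounting, because it is here that the earlier $\sigma(u_i,v_j)$-theorems produce a second, reverse geodesic, collapsing the pair dependency from $1$ to $1/2$; this happens for odd $r=k$ in the odd-$n$ regime (i.e.\ when $n\equiv 3\pmod 4$) and for every diametric pair when $n$ is even. I would enumerate these exactly as in Lemma~\ref{lemb6}, counting the outer vertices whose entry into the inner cycle falls on the opposite side of $v_0$ from the diametric $v_j$. The main obstacle will be this diametric bookkeeping: the number of qualifying outer vertices, and whether $v_0$ actually lies on the forward or on the reverse geodesic, depends subtly on the parity of $k$ through the inner-cycle diameter and must be verified case by case. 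Once the diametric corrections are added to the bulk sum, routine simplification reproduces the claimed $\frac{(n-1)^2}{8}$, $\frac{n^2-2n+5}{8}$, and $\frac{n(n-2)}{8}$, whose pairwise differences of $\tfrac{1}{2}$ and $\tfrac{1}{8}$ are precisely the signature of these extreme corrections.
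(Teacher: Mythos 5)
Your strategy coincides with the paper's: enumerate the mixed pairs $(u_i,v_j)$, use the fact that a shortest $u_i$--$v_j$ path enters the inner cycle through one parity-determined spoke and then follows the shorter inner arc, count the pairs for which $v_0$ lies on that arc (dependency $1$ generically), and correct for the extreme pairs where $\sigma_{u_iv_j}=2$ (odd $r=k$ when $n\equiv 3\pmod 4$, and $r=k$ when $n$ is even). The paper organizes exactly this count by the outer endpoint (from $u_0$ there are $k/2$ geodesics through $v_0$ to either side, from $u_i$ and $u_{i-1}$ with even $i\le k-2$ there are $(k-i)/2$ each, summing to $k^2/2$ for $n=2k+1$, $k$ even), whereas you organize it by the two arc lengths $t$ and $s$; that is a bookkeeping difference, not a different route.

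There is, however, a concrete flaw in your membership criterion that would make the count fail as stated. You require $v_0$ to be an \emph{internal} vertex of the inner arc, ``equivalently the entry vertex and $v_j$ sit on opposite sides of $v_0$.'' This excludes precisely the pairs whose geodesic enters the inner cycle \emph{at} $v_0$: all pairs $(u_0,v_j)$ with $j\equiv 0\pmod 2$, whose unique geodesic is $u_0,v_0,v_{\pm 2},\dots,v_j$, and the pairs $(u_{\pm 1},v_j)$ whose geodesic runs $u_{\pm 1},u_0,v_0,\dots,v_j$. In these cases $v_0$ is an endpoint of the arc ($t=0$) but still an interior vertex of the $u_i$--$v_j$ geodesic, so they do contribute to $B(v_0,U|V)$; in the paper's tally for $n=2k+1$ with $k$ even they account for $k+(k-2)$ of the $k^2/2$ geodesics, i.e.\ a $\Theta(n)$ portion, and similarly in the other cases. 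With your criterion taken literally the bulk sum comes out short by $\Theta(n)$, which cannot be absorbed by the diametric corrections (whose net effect is only the constants $\tfrac12$ and $\tfrac18$ you identified), so the ``routine simplification'' would not reproduce the stated formulas. The fix is simply to allow $t=0$ in your two-index enumeration, requiring only $s\ge 1$ (i.e.\ $v_j\neq v_0$); a minor additional point is that the entry spoke for opposite parity is at $u_{i+1}$ or $u_{i-1}$ according to the direction of $v_j$, not always $u_{i+1}$.
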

\begin{proof}
In $GP(n,2)$ where $n=2k+1$, $k=2l$, $l\geq 3$,
consider the possible $u$-$v$ pairs determining the value of $B(v_0)$. From $u_0$ there are $k/2$ geodesics passing through $v_0$ to either sides.  For an even index $i\leq k-2$, from  $u_i$ and $u_{i-1}$  there is an equal number of geodesics, i.e, $(k-i)/2$ passing through $v_0$  and therefore, by symmetry, there exists $k^2/2$, $u$-$v$ geodesics  through $v_0$.
Hence $$B(v_0,U|V)=\frac{(n-1)^2}{8}\;\hbox{ for } n=13,17,21,\ldots$$ 
In the case of odd $k$, i.e, $k=2l+1$, $l\geq 3$ there is one more geodesic of 1/2 from each outer vertex and  the expression $k^2/2$ turns to be $(k-1)^2/2+k$ Hence $$B(v_0,U|V)=\frac{1}{8}(n^2-2n+5)\;\hbox{ for  }n=15,19,23,\ldots$$ 
Consider the case $n=2k,\,k\geq 7$. From $u_0$ to any inner even vertex $v_j$, $(j\neq 0)$ there is a unique geodesic passing through $v_0$, and from $u_i$ to $v_j$ there are two if $v_i$ and $v_j$ are diametric pairs of the inner even cycle. Thus  $k(k-1)/2$  stands for $B(v_0)$. Hence
$$B(v_0,U|V)=\frac{n(n-2)}{8}\;\hbox{ for  even }n,\,n\geq 12$$ 
\end{proof}
\section{Conclusion}
Here we found the number of geodesics between two vertices and the betweenness centrality of $GP(n,k)$ for $k=2$.  The study of geodesics is extremely important in the context of interconnection networks and it has wide applications in routing , fault-tolerance, time delays and in the calculation of many  centrality measures. This work may be extended for any $k<n/2$.
\bibliographystyle{unsrt}
\bibliography{sunilbiblio}
\end{document}